\def\elsartstyle{%
    \def\normalsize{\@setfontsize\normalsize\@xiipt{14.5}}
    \def\small{\@setfontsize\small\@xipt{13.6}}
    \let\footnotesize=\small
    \def\large{\@setfontsize\large\@xivpt{18}}
    \def\Large{\@setfontsize\Large\@xviipt{22}}
    \skip\@mpfootins = 18\p@ \@plus 2\p@
    \normalsize
               }
\newtheorem{theorem}{Theorem}
\newtheorem{lemma}[theorem]{Lemma}
\newtheorem{corollary}[theorem]{Corollary}
\newtheorem{proposition}[theorem]{Proposition}
\newcommand{\fd}{\mathbb{F}}
\newcommand{\z}{\mathbb{Z}}
\begin{document}
\begin{minipage}{\textwidth}
\elsartstyle
\parskip 12pt
\renewcommand{\thempfootnote}{\fnsymbol{mpfootnote}}
\leftskip=2pc
\begin{center}
{\LARGE Infinite Families of Recursive Formulas Generating Power
Moments of Kloosterman Sums: Symplectic Case \par}
\large
Dae San Kim\\[12pt]
\small\itshape Department of Mathematics, Sogang University, Seoul
121-742, Korea
\end{center}

\bigskip
\leftskip=0pt

\hrule\vskip 8pt
\begin{small}
{\bfseries Abstract}
\parindent 1em

In this paper, we construct two infinite families of binary linear
codes  associated with double cosets with respect to certain maximal
parabolic subgroup  of the symplectic group $Sp(2n,q)$.  Here $q$ is
a power of two. Then we obtain an infinite family of recursive
formulas for the power moments of Kloosterman sums and those of
$2$-dimensional Kloosterman sums in terms of the frequencies of
weights in the codes. This is done via Pless power moment identity
and by utilizing the explicit expressions of exponential sums over
those double cosets  related to the evaluations of ``Gauss sums" for
the symplectic  groups $Sp(2n,q)$.

\noindent\textit{Index terms:} Kloosterman sum, $2$-dimensional
Kloosterman sum, symplectic  group, double cosets, maximal parabolic
subgroup, Pless power moment identity, weight distribution.
\end{small}\\
MSC2000: 11T23, 20G40, 94B05.
\vskip 10pt\hrule

\leftskip=0pt

\vspace{24pt}
\renewcommand{\thempfootnote}{\astsymbol{mpfootnote}}
\footnotetext[1]{Corresponding author.} \setbox0=\hbox{\footnotesize
1} \edef\thempfootnote{\hskip\wd0} \footnotetext[0]{\textit{Email
adress:} dskim@sogang.ac.kr
  (Dae San Kim).}

\section{Introduction}
Let $\psi$ be a nontrivial additive character of the finite field
$\mathbb{F}_{q}$ with $q=p^{r}$ elements ($p$ a prime), and let $m$
be a positive integer. Then the $m$-dimensional  Kloosterman sum
$K_{m}(\psi;a)$(\cite{RH}) is defined by

\begin{align*}
K_{m}(\psi;a)=\sum_{\alpha_{1},\cdots,\alpha_{m} \in
\mathbb{F}_{q}^{*}}\psi(\alpha_{1}+\cdots+\alpha_{m}+a\alpha_{1}^{-1}\cdots\alpha_{m}^{-1})\\
(a \in \mathbb{F}_{q}^{*}).\\
\end{align*}

\end{minipage}
\bigskip

In particular, if $m=1$, then $K_{1}(\psi;a)$ is simply denoted by
$K(\psi;a)$, and is called the Kloosterman sum. The Kloosterman sum
was introduced in 1926 to give an estimate for the Fourier
coefficients of modular forms (cf. \cite{HDK}, \cite{JH}). It has
also been studied to solve  various problems in coding theory and
cryptography over finite fields of characteristic two (cf.
\cite{PTV}, \cite{HPT}).

For each nonnegative integer $h$, by $MK_{m}(\psi)^{h}$ we will
denote the $h$-th moment of the $m$-dimensional Kloosterman sum
$K_{m}(\psi;a)$. Namely, it is given by
\begin{equation*}
 MK_{m}(\psi)^{h}=\sum_{a \in \mathbb{F}_{q}^{*}}K_{m}(\psi;a)^{h}.
 \end{equation*}

If $\psi=\lambda$ is the canonical additive character of
$\mathbb{F}_{q}$, then $MK_{m}(\lambda)^{h}$ will be simply denoted
by $MK_{m}^{h}$. If further $m=1$, for brevity $MK_{1}^{h}$ will be
indicated by $MK^{h}$.

Explicit computations on power moments of Kloosterman sums were
begun with the paper \cite{HS} of Sali\'{e} in 1931, where he
showed, for any odd prime $q$,
\begin{equation*}
MK^h=q^2M_{h-1}-(q-1)^{h-1}+2(-1)^{h-1}(h \geq 1).
\end{equation*}

Here $M_{0}=0$, and for $h \in \mathbb{Z}_{>0}$,

\begin{equation*}
M_{h}=|\{(\alpha_{1}, \cdots,\alpha_{h})\in(\mathbb{F}_{q}^{*})^{h}
| \sum_{j=1}^{h} \alpha_{j}=1=\sum_{j=1}^{h}\alpha_j^{-1}\}|.
\end{equation*}

For $q=p$ odd prime, Sali\'{e} obtained  $MK^{1}$, $MK^{2}$, $MK^{3}
$, $MK^{4}$ in \cite{HS} by determining $M_{1},M_{2},M_{3}$.
$MK^{5}$ can be expressed in terms of the $p$-th eigenvalue for a
weight 3 newform on $\Gamma_{0}$(15) (cf. \cite{RL}, \cite{CJM}).
$MK^{6}$ can be expressed in terms of the $p$-th eigenvalue for a
weight 4 newform on $\Gamma_{0}$(6) (cf. \cite{KJBD}). Also, based
on numerical evidence, in \cite{RJE}. Also, based on numerical
evidence, in \cite{RJE} Evans was led to propose a conjecture which
expresses $MK^7$ in terms of Hecke eigenvalues for a weight 3
newform on $\Gamma_0 (525)$ with quartic nebentypus of conductor
105. For more details about this brief history of explicit
computations on power moments of Kloosterman sums, one is referred
to Section IV of \cite{D2}.

From now on, let us assume that $q=2^{r}$. Carlitz\cite{L1}
evaluated $MK^{h}$ for $h \leq 4$. Recently, Moisio was able to find
explicit expressions of $MK^{h}$, for the other values of $h$ with
$h \leq 10$ (cf.\cite{M1}). This was done, via Pless power moment
identity, by connecting moments of Kloosterman sums and the
frequencies of weights in the binary Zetterberg code of length $q+1$
which were known by the work of Schoof and Vlugt in \cite{RM}.

In \cite{D2}, the binary linear codes $C(SL(n,q))$ associated with
finite special linear groups $SL(n,q)$ were constructed when $n,q$
are both powers of two. Then obtained was a recursive formula for
the power moments of multi-dimensional Kloosterman sums in terms of
the frequencies of weights in $C(SL(n,q)$. In particular, when
$n=2$, this gives a recursive formula for the power moments of
Kloosterman sums. Also, in order to get recursive formulas for the
power moments of Kloosterman and 2-dimensional Kloosterman sums, we
constructed in \cite{D3} three binary linear codes $C(SO^{+}(2,q))$,
$C(O^{+}(2,q) )$, $C(SO^{+}(4,q))$, respectively associated with
$SO^{+}(2,q)$, $O^{+} (2,q)$, $SO^{+}(4,q)$, and in \cite{D4} three
binary linear codes $C(SO^{-}(2,q))$, $C(O^{-}(2,q))$,
$C(SO^{-}(4,q))$, respectively associated with $SO^{-}(2,q)$,
$O^{-}(2,q)$, $SO^{-}(4,q)$. All of these were done via Pless power
moment identity and by utilizing our previous results on explicit
expressions of Gauss sums for the stated finite classical groups.
Still, in all, we had only a handful of recursive formulas
generating power moments of Kloosterman and 2-dimesional Kloosterman
sums.

In this paper, we will be able to produce infinite families of
recursive formulas generating power moments of Kloosterman and
2-dimensional Kloosterman sums. To do that, we construct two
infinite families of binary linear codes $C(DC^{-} (n,q))(n=1,3,5,
\cdots)$ and $C(DC^{+}(n,q))(n=2,4,6, \cdots)$, respectively
associated with the double cosets $DC^{-}(n,q)=P \sigma_{n-1}P$ and
$DC^{-}(n,q)=P \sigma_{n-2}P$ with respect to the maximal parabolic
subgroup $P=P(2n,q)$ of the symplectic group $Sp(2n,q)$, and express
those power moments in terms of the frequencies of weights in each
code. Then, thanks to our previous results on the explicit
expressions of exponential sums over those double cosets related to
the evaluations of ``Gauss sums" for the symplectic groups
$Sp(2n,q)$ \cite{D1}, we can express the weight of each codeword in
the duals of the codes in terms of Kloosterman or 2-dimensional
Kloosterman sums. Then our formulas will follow immediately from the
Pless power moment identity.

Theorem \ref{A} in the following(cf. (\ref{a5}), (\ref{a6}),
(\ref{a8})-(\ref{a10})) is the main result of this paper.
Henceforth, we agree that the binomial coefficient $\binom{b}{a}=0$,
if $a > b$ or $a<0$. To simplify notations, we introduce the
following ones which will be used throughout this paper at various
places.

\begin{equation}\label{a1}
A^{-}(n,q)=q^{ \frac{1}{4}(5n^2 -1)} \left[ \substack{n \\ 1}
 \right]_q\prod_{j=1}^{(n-1)/2}(q^{2j-1}-1),
\end{equation}
\begin{equation}\label{a2}
B^{-}(n,q)=q^{
\frac{1}{4}(n-1)^2}(q^n-1)\prod_{j=1}^{(n-1)/2}(q^{2j}-1),
\end{equation}
\begin{equation}\label{a3}
A^{+}(n,q)=q^{\frac{1}{4}(5n^2-2n)} \left[ \substack{n \\ 2}
 \right]_q
\prod_{j=1}^{(n-2)/2}(q^{2j-1}-1),
\end{equation}
\begin{equation}\label{a4}
B^{+}(n,q)=q^{\frac{1}{4}(n-2)^2}(q^n-1)(q^{n-1}-1)\prod_{j=1}^{(n-2)/2}(q^{2j}
-1).
\end{equation}
Henceforth we agree that the binomial coefficient $\binom{b}{a}=0$,
if $a>b$ and $a<0$.\\

\begin{theorem}\label{A}
 Let $q=2^{r}$. Then, with the notations in (\ref{a1})-(\ref{a4}), we have the following.
(a) For either each odd $n \geq 3$ and all $q$,  or $n=1$ and all $q
\geq 8$, we have a recursive formula generating power moments of
Kloosterman sums over $\mathbb{F}_{q}$

\begin{align}\label{a5}
  \begin{split}
 MK^h=& \sum_{l=0}^{h-1}(-1)^{h+l+1}\binom{h}{l}B^{-}(n,q)^{h-l}MK^{l}\\
      & +qA^{-}(n,q)^{-h}\sum_{j=0}^{min \{N^{-}(n,q),h\}}(-1)^{h+j}C_{j}^{-}(n,q)\\
      & \times \sum_{t=j}^{h}t!S(h,t)2^{h-t}\binom{N^-(n,q)-j}{N^{-}(n,q)-t}\\
      & (h=1,2,\cdots),
  \end{split}
\end{align}
where $N^{-}(n,q)=|DC^{-}(n,q)|=A^{-}(n,q)B^{-}(n,q)$, and
$\{C_{j}^{-}(n,q)\}_{j=0}^{N^{-}(n,q)}$ is the weight distribution
of $C(DC^{-}(n,q))$ given by

\begin{align}\label{a6}
  \begin{split}
 C_{j}^{-}(n,q)&= \sum_{}^{}\binom{q^{-1}A^-(n,q)(B^-(n,q)+1)}{\nu_{0}} \\
               &\times \prod_{tr(\beta^{-1} )=0} \binom{q^{-1}A^{-}(n,q)(B^{-}(n,q)+q+1)}{\nu_{\beta}} \\
               & \times \prod_{tr(\beta^{-1})=1}\binom{q^{-1}A^{-}(n,q)(B^{-}(n,q)-q+1)}{\nu_{\beta}}.
  \end{split}
\end{align}
Here the sum is over all the sets of nonnegative integers
$\{\nu_{\beta}\}_{\beta \in \mathbb{F}_{q}}$ satisfying $
\sum_{\beta \in \mathbb{F}_{q}} \nu_{\beta}=j$ and $\sum_{\beta \in
\mathbb{F}_{q}}^{} \nu_{\beta} \beta =0$. In addition, $S(h,t)$ is
the Stirling number of the second kind defined by
\begin{equation}\label{a7}
S(h,t)= \frac{1}{t!} \sum_{j=0}^{t}(-1)^{t-j}\binom{t}{j}j^{h}.
\end{equation}

(b) For each even $n \geq 2 $ and all $q \geq 4$, we have recursive
formulas generating power moments of 2-dimensional Kloosterman sums
over $\mathbb{F}_{q}$ and even power moments of Kloosterman sums
over $\mathbb{F}_{q}$

\begin{align}\label{a8}
  \begin{split}
 MK_2^h=&\sum_{l=0}^{h-1}(-1)^{h+l+1}\binom{h}{l}(B^{+}(n,q)-q^2)^{h-l}MK_2^l \\
        &+qA^{+}(n,q)^{-h}\sum_{j=0}^{min \{ N^{+}(n,q),h \}}(-1)^{h+j}C_{j}^{+}(n,q) \\
        & \times \sum_{t=j}^{h}t!S(h,t)2^{h-t} \binom{N ^{+}(n,q)-j}{N^{+}(n,q)-t}\\
        &( h=1,2,\cdots),
  \end{split}
\end{align}
and
\begin{align}\label{a9}
  \begin{split}
 MK^{2h}=&\sum_{l=0}^{h-1}(-1)^{h+l+1}\binom{h}{l}(B^+(n,q)-q^2+q)^{h-l}MK^{2 l}\\
         &+qA^{+}(n,q)^{-h} \sum_{j=0}^{min \{N^{+}(n,q),h \}}(-1)^{h+j}C_{j}^{+}(n,q)\\
         & \times \sum_{t=j}^{h}t!S(h,t)2^{h-t}\binom{N ^{+}(n,q)-j}{N^{+}(n,q)-t}\\
         & ( h=1,2,\cdots),
  \end{split}
\end{align}
where $N^+(n,q)=|DC^{+}(n,q)|=A^{+}(n,q)B^{+}(n,q)$, and $\{C_j^+
(n,q)\}_{j=0}^{N^+(n,q)}$ is the weight distribution of $C(DC^+
(n,q))$ given by

\begin{align}\label{a10}
  \begin{split}
  &C_{j}^{+}(n,q)\\
  &=\sum_{}^{}\binom{q^{-1}A^{+}(n,q)(B^{+}(n,q)+q^{3}-q^2-1)}{\nu_{0}} \prod_{\substack{|\tau |< 2\sqrt{q}\\ \tau \equiv -1(4)}}\\
  &\times\prod_{K(\lambda; \beta^{-1})=\tau}\binom{q^{-1}A^+(n,q)(B^+(n,q)+q \tau -q^2 -1)}{\nu_ {\beta}},
  \end{split}
\end{align}
Here the sum is over all the sets of nonnegative integers $\{\nu_
\beta\}_{\beta \in \mathbb{F}_{q}}$ satisfying  $\sum_{\beta \in
\mathbb{F}_{q}}\nu_{\beta}=j$, and $\sum_{ \beta \in \mathbb{F}_{q}
}\nu_{\beta} \beta=0$.\\

The following corollary is just $n=1$ the  case of (a) in the
above.\\
\end{theorem}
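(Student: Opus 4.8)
The plan is to obtain each recursion as a Pless power moment identity read off from a concrete binary code, with Kloosterman sums entering through the symplectic exponential-sum evaluations of \cite{D1}. First I would fix $C(DC^-(n,q))$ (and, in (b), $C(DC^+(n,q))$) to be the binary code of length $N^-(n,q)=|DC^-(n,q)|$ whose coordinate positions are indexed by the elements $g$ of the double coset, each carrying a label $\beta(g)\in\mathbb{F}_q$, with the single $\mathbb{F}_q$-linear defining condition $\sum_i x_i\beta(g_i)=0$. Because that constraint is equivalent to $\sum_i x_i\,\mathrm{tr}(v\beta(g_i))=0$ for all $v\in\mathbb{F}_q$, the dual $C(DC^-(n,q))^\perp$ is exactly $\{c(v):v\in\mathbb{F}_q\}$ with $c(v)=(\mathrm{tr}(v\beta(g)))_g$, and by additive-character orthogonality its weights are $w(c(v))=\tfrac12\big(N^-(n,q)-\sum_{g\in DC^-(n,q)}\lambda(v\beta(g))\big)$.

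The key input is \cite{D1}: the exponential sum $\sum_{g}\lambda(v\beta(g))$ over the double coset is evaluated explicitly, and for $v\neq0$ it equals $A^-(n,q)K(\lambda;v)$ (and in the plus case $A^+(n,q)$ times a $2$-dimensional Kloosterman value). Thus every nonzero dual weight is the affine-linear function $w(c(v))=\tfrac12(N^-(n,q)-A^-(n,q)K(\lambda;v))$ of a Kloosterman sum. By Fourier inversion the same evaluation yields the column multiplicities $m_\beta=\#\{g:\beta(g)=\beta\}$, namely $q^{-1}A^-(n,q)(B^-(n,q)+1)$ for $\beta=0$ and $q^{-1}A^-(n,q)(B^-(n,q)\pm q+1)$ according to $\mathrm{tr}(\beta^{-1})=0,1$; counting weight-$j$ codewords as selections of columns summing to zero then produces the distribution (\ref{a6}) as the sum over $\{\nu_\beta\}$ with $\sum\nu_\beta=j$, $\sum\nu_\beta\beta=0$. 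The analogous count in the plus case, where the multiplicities are governed by the value $\tau=K(\lambda;\beta^{-1})$ (which forces $|\tau|<2\sqrt q$, $\tau\equiv-1\pmod4$), yields (\ref{a10}).

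Next I would apply the Pless power moment identity to $C(DC^-(n,q))^\perp$, equating the Pless expression for its $h$-th weight moment in terms of $\{C_j^-(n,q)\}$ with the direct expansion
\begin{equation*}
\sum_{v\neq0}\Big[\tfrac12\big(N^-(n,q)-A^-(n,q)K(\lambda;v)\big)\Big]^h=\frac1{2^h}\sum_{l=0}^h\binom{h}{l}N^-(n,q)^{h-l}(-A^-(n,q))^l MK^l,
\end{equation*}
obtained from the binomial theorem together with $\sum_{v\neq0}K(\lambda;v)^l=MK^l$. Isolating the top term $l=h$ and using $N^-(n,q)=A^-(n,q)B^-(n,q)$ and $q=2^r$ to clear the powers of $A^-(n,q)$ and of $2$ turns the identity into (\ref{a5}): the lower terms $l<h$ collapse precisely to $\sum_{l=0}^{h-1}(-1)^{h+l+1}\binom{h}{l}B^-(n,q)^{h-l}MK^l$, while the Pless part becomes $qA^-(n,q)^{-h}\sum_j(-1)^{h+j}C_j^-(n,q)\sum_{t=j}^h t!S(h,t)2^{h-t}\binom{N^-(n,q)-j}{N^-(n,q)-t}$. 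Part (b) runs identically on $C(DC^+(n,q))$, the only new feature being that the plus-case exponential sum can be read both as a $2$-dimensional Kloosterman sum, giving (\ref{a8}) with $MK_2^h$, and, via the standard identity relating $K_2(\lambda;a)$ and $K(\lambda;a)^2$ (they differ by the constant $q$), as a squared ordinary Kloosterman sum, giving (\ref{a9}) with $MK^{2h}$; this $q$-shift is exactly the source of the distinction between $B^+(n,q)-q^2$ and $B^+(n,q)-q^2+q$.

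The main obstacle is not the Pless bookkeeping, which is mechanical once the weights are known, but the precise identification of the column multiplicities and dual weights with the (shifted) Kloosterman and $2$-dimensional Kloosterman sums; this rests entirely on the explicit symplectic Gauss-sum evaluations of \cite{D1}, and the delicate points are tracking the exact constants — the trace splitting $\mathrm{tr}(\beta^{-1})=0,1$ with the near-equal partition of $\mathbb{F}_q^\ast$ in the minus case, and in the plus case verifying that the admissible $\tau$ are precisely the binary Kloosterman values with $\tau\equiv-1\pmod4$, $|\tau|<2\sqrt q$. One must also check the non-degeneracy hypotheses on $q$ (e.g. $q\ge8$ when $n=1$): these are exactly the conditions ensuring that the labels $\beta(g)$ of positive multiplicity $\mathbb{F}_2$-span $\mathbb{F}_q$, so that $v\mapsto c(v)$ is injective, the dual has full dimension $r$, and the factor $2^{r}=q$ enters the Pless identity as written.
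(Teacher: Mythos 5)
Your proposal follows essentially the same route as the paper: the same code construction with coordinates labelled by $Tr(g)$, the dual identified as $\{c(a)\}_{a\in\mathbb{F}_q}$ (the paper cites Delsarte's theorem where you argue it directly), dual weights computed from the exponential sums $\sum_{w\in DC^{\mp}(n,q)}\lambda(aTr\,w)$ evaluated via \cite{D1}, the weight distribution obtained by multinomial counting over the fibres $N_{DC^{\mp}(n,q)}(\beta)$, and the Pless identity applied to $C(DC^{\mp}(n,q))^{\bot}$ with the $l=h$ term isolated. All the delicate points you flag (the trace splitting, the Lachaud--Wolfmann description of the range of $K(\lambda;\cdot)$, the injectivity of $a\mapsto c^{\mp}(a)$ giving $\dim_{\mathbb{F}_2}C^{\bot}=r$ under the stated restrictions on $n,q$) are exactly the ones the paper handles, so the proposal is correct.
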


\begin{corollary}
Let $q \geq 8 $. For $h=1,2,\cdots $,
\begin{align*}
  \begin{split}
 MK^h=& \sum_{l=0}^{h-1}(-1)^{h+l+1}\binom{h}{l}(q-1)^{h-l}MK^l\\
      &+q^{1-h}\sum_{j=0}^{ min \{q(q -1),h \}}(-1)^{h+j}C_{j}^{-}(1,q)\\
      & \times \sum_{t=j}^{h}t!S(h,t)2^{h-t}\binom{q(q-1)-j}{q(q-1)-t}.
  \end{split}
\end{align*}
where $\{C_j^-(1,q)\}_{j=0}^{q(q-1)}$ is the weight distribution of
$C(DC^{-}(1,q))$ given by

\begin{equation*}
C_j^-(1,q)=\sum
\binom{q}{\nu_0}\prod_{tr(\beta^{-1})=0}\binom{2q}{\nu_{\beta}}.
\end{equation*}
Here the sum is over all the sets of nonnegative integers $\{\nu_0\}
\cup \{\nu_{\beta}\}_{tr(\beta^{-1})=0}$ satisfying $\nu_0+
\sum_{tr(\beta^{-1})=0}^{}\nu_ \beta=j$ and
$\sum_{tr(\beta^{-1})=0}\nu_ {\beta}\beta=0$. In addition, $S(h,t)$
is the Stirling number of the second kind given in (\ref{a7}).
\end{corollary}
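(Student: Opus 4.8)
The plan is to obtain the Corollary purely by specializing Theorem \ref{A}(a) to $n=1$; no new idea is needed beyond evaluating the various factors at $n=1$ and tracking how the weight-distribution sum collapses. First I would compute the constants. In (\ref{a1}) the product $\prod_{j=1}^{(n-1)/2}$ is empty when $n=1$, the Gaussian binomial $\left[ \substack{1 \\ 1} \right]_q$ equals $1$, and the exponent $\frac{1}{4}(5n^2-1)$ equals $1$; hence $A^{-}(1,q)=q$. Likewise in (\ref{a2}) the empty product together with the vanishing exponent $\frac{1}{4}(n-1)^2$ leaves only the factor $q^n-1$, so $B^{-}(1,q)=q-1$. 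Consequently $N^{-}(1,q)=A^{-}(1,q)B^{-}(1,q)=q(q-1)$, and the outer prefactor becomes $qA^{-}(1,q)^{-h}=q^{1-h}$. Substituting $B^{-}(1,q)=q-1$, $A^{-}(1,q)^{-h}=q^{-h}$ and $N^{-}(1,q)=q(q-1)$ into (\ref{a5}) reproduces the recursive formula of the Corollary verbatim.

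Next I would specialize the weight distribution (\ref{a6}). Since $q^{-1}A^{-}(1,q)=1$, the three upper indices of the binomial coefficients reduce to $B^{-}(1,q)+1=q$, $B^{-}(1,q)+q+1=2q$ and $B^{-}(1,q)-q+1=0$. The last of these is the crucial point: the factor $\binom{0}{\nu_{\beta}}$ attached to each $\beta$ with $tr(\beta^{-1})=1$ equals $1$ when $\nu_{\beta}=0$ and vanishes otherwise, so every admissible index set must have $\nu_{\beta}=0$ for all such $\beta$. Dropping these now-trivial indices leaves only $\nu_{0}$ together with $\{\nu_{\beta}\}_{tr(\beta^{-1})=0}$, and the two defining constraints $\sum_{\beta}\nu_{\beta}=j$ and $\sum_{\beta}\nu_{\beta}\beta=0$ restrict to $\nu_{0}+\sum_{tr(\beta^{-1})=0}\nu_{\beta}=j$ and $\sum_{tr(\beta^{-1})=0}\nu_{\beta}\beta=0$ (the $\beta=0$ term contributes nothing to the second sum). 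This yields exactly the stated formula for $C_{j}^{-}(1,q)$.

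The range of validity is inherited directly from Theorem \ref{A}(a), which for $n=1$ requires $q\geq 8$, so I would simply carry this hypothesis over. There is no substantive obstacle here: the only place demanding any care is the collapse of the $tr(\beta^{-1})=1$ factor in the weight distribution, where one must note both that it forces $\nu_{\beta}=0$ and that the surviving constraints are merely the restrictions of the original ones to the remaining indices. Everything else is a mechanical evaluation of (\ref{a1}), (\ref{a2}), (\ref{a5}) and (\ref{a6}) at $n=1$.
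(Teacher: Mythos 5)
Your proposal is correct and follows exactly the route the paper intends: the paper states the corollary is simply the $n=1$ case of Theorem \ref{A}(a), and your evaluations $A^{-}(1,q)=q$, $B^{-}(1,q)=q-1$, $N^{-}(1,q)=q(q-1)$, together with the observation that the $\binom{0}{\nu_{\beta}}$ factors force $\nu_{\beta}=0$ for $tr(\beta^{-1})=1$, are precisely the required specializations. Nothing is missing.
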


\section{$Sp(2n,q)$}
For more details about this section, one is referred to the paper
\cite{D1}. Throughout this paper, the following notations will be
used:
\begin{itemize}
 \item [] $q = 2^r$ ($r \in \mathbb{Z}_{>0}$),\\
 \item [] $\mathbb{F}_{q}$ = the finite field with $q$ elements,\\
 \item [] $Tr A$ = the trace of $A$ for a square matrix $A$,\\
 \item [] $^tB$ = the transpose of $B$ for any matrix $B$.
\end{itemize}\
The symplectic group over the field  is defined as:

\begin{equation*}
Sp(2n,q)=\{w \in GL(2n,q)|{}^twJw=J \},
\end{equation*}
with
\begin{equation*}
J=\begin{bmatrix}
            0      & 1_{n} \\
            1_{n} & 0
  \end{bmatrix}
\end{equation*}

$P=P(2n,q)$ is the maximal parabolic subgroup of $Sp(2n,q)$ defined
by:
\begin{align*}
P(2n,q)&= \bigg\{
\begin{bmatrix}
   A & 0 \\
   0 & {}^{t}A^{-1}
\end{bmatrix}
\begin{bmatrix}
  1_{n} & B  \\
  0     & 1_{n}
\end{bmatrix}
\bigg| A \in GL(n,q), {}^{t}B=B\}
\end{align*}

Then, with respect to $P=P(2n,q)$, the Bruhat decomposition of
$Sp(2n,q)$ is given by
\begin{equation}\label{a11}
Sp(2n,q)=\coprod_{r=0}^{n} P\sigma_{r}P,
\end{equation}
where
\[
\sigma_r=
\begin{bmatrix}
   0   & 0         & 1_r  & 0 \\
   0   & 1_{n-r}  & 0    & 0 \\
  1_{r} & 0       & 0  &0\\
   0   & 0         & 0    & 1_{n-r}
\end{bmatrix}
\in Sp(2n,q).
\]

Put, for each $r$ with $0 \leq r \leq n$,
\begin{equation*}
A_r=\{w \in P(2n,q)| \sigma_{r}w \sigma_{r}^{-1} \in P(2n,q)\}.
\end{equation*}

Expressing $Sp(2n,q)$ as a disjoint union of right cosets of
$P=P(2n,q)$, the Bruhat decomposition in (11) can be written as

\begin{equation*}
Sp(2n,q)=\coprod_{r=0}^{n}P \sigma_{r}(A_{r}\backslash P).
\end{equation*}

The order of the general linear group $ GL(n,q)$ is given by

\begin{equation*}
g_n=\prod_{j=0}^{n-1}(q^n-q^j)=q^{\binom{n}{2}}\prod_{j=1}^{n}(q^{j}-1).
\end{equation*}

For integers $n,r$ with $0 \leq r \leq n$, the $q$-binomial
coefficients are defined as:

\begin{equation*}
\left[ \substack{n \\ r}
 \right]_q = \prod_{j=0}^{r-1} (q^{n-j} - 1)/(q^{r-j} - 1).
 \end{equation*}

Then, for integers $n,r$  with $0 \leq r \leq n$, we have
\begin{equation}\label{a12}
\frac{g_n}{g_{n-r} g_r} = q^{r(n-r)}\left[ \substack{n \\ r}
 \right]_q.
\end{equation}

In \cite{D1}, it is shown that
\begin{equation}\label{a13}
|A_{r}|=g_{r}g_{n-r}q^{\binom{n+1}{2}}q^{r(2n-3r-1)/2}.
\end{equation}
Also, it is immediate to see that
\begin{equation}\label{a14}
|P(2n,q)|=q^{\binom{n+1}{2}}g_n.
\end{equation}
So, from (\ref{a12})-(\ref{a14}), we get

\begin{equation}\label{a15}
\mid A_r\backslash P(2n,q) \mid = q^{\binom{r+1}{2}}\left[ \substack{n\\
r}
 \right]_q,
\end{equation}
and
\begin{align}\label{a16}
\begin{split}
  &\mid P(2n,q)\sigma_{r}P(2n,q)\mid\\
  &= \mid P(2n,q) \mid^{2} \mid A_r \mid^{-1}\\
  & =q^{n^{2}}\left[ \substack{n\\r} \right]_q q^{{r \choose 2}}q^{r}\prod_{j=1}^{n}(q^{j}-1).
\end{split}
\end{align}

In particular, with
\begin{align*}
 \begin{split}
&DC^-(n,q)=P(2n,q)\sigma_{n-1}P(2n,q),\\
&DC^+(n,q)=P(2n,q) \sigma_{n-2}P(2n,q),
 \end{split}
\end{align*}

\begin{equation}\label{a17}
|DC^-(n,q)|=q^{\frac{1}{2 }n(3n-1)} \left[ \substack{n\\1} \right]_q
\prod_{j=1}^n(q^{j}-1),
\end{equation}
\begin{equation}\label{a18}
|DC^+(n,q)|=q^{\frac{1}{2}(3n^2-3n+2)}\left[ \substack{n\\2}
\right]_q \prod_{j=1}^n(q^j-1).
\end{equation}

Also, from (\ref{a11}), (\ref{a16}), we have
\begin{align*}
  \begin{split}
|Sp(2n,q)|&=\sum_{r=0}^{n}|P(2n,q)|^2|A_r |^{-1}\\
          &=q^{n^2}\prod_{j=1}^n(q^{2j}-1),
  \end{split}
\end{align*}
where one can apply the following $q$-binomial theorem with $x=-q$:
\begin{equation*}
\sum_{r=0}^{n}\left[ \substack{n\\r} \right]_q(-1)^{r}
q^{\binom{r}{2}} x^r=(x; q)_{n},
\end{equation*}
with $(x;q)_n=(1-x)(1-qx)\cdots (1-q^{n-1}x)$ ($x$ an indeterminate,
$n \in \mathbb{Z}_{>0}$).

\section{Exponential sums over double cosets of $Sp(2n,q)$}

The following notations will be used throughout this paper.
\begin{gather*}
tr(x)=x+x^2+\cdots+x^{2^{r-1}} \text{the trace function}
~\mathbb{F}_{q}
\rightarrow \mathbb{F}_2,\\
\lambda(x) = (-1)^{tr(x)} ~\text{the canonical additive character
of} ~\fd_q.
\end{gather*}
Then any nontrivial additive character $\psi$ of $\fd_q$ is given by
$\psi(x) = \lambda(ax)$ , for a unique $a \in \fd_q^*$.

For any nontrivial additive character $\psi$ of $\fd_q$ and $a \in
\fd_q^*$, the Kloosterman sum $K_{GL(t,q)}(\psi ; a)$ for $GL(t,q)$
is defined as
\begin{equation*}
K_{GL(t,q)}(\psi ; a) = \sum_{w \in GL(t,q)} \psi(Trw + a~Trw^{-1}).
\end{equation*}
Notice that, for $t=1 $, $ K_{GL(1,q)}( \psi;a)$ denotes the
Kloosterman sum $K(\psi;a) $.

For the Kloosterman sum $K(\psi;a)$, we have the Weil bound(cf.
\cite{RH})
\begin{equation}\label{a19}
\mid K(\psi ; a) \mid \leq 2\sqrt{q}.
\end{equation}

In \cite{D1}, it is shown that $K_{GL(t,q)}(\psi ; a)$ ~satisfies
the following recursive relation: for integers $t \geq 2$, ~$a \in
\fd_q^*$ ,
\begin{multline}\label{a20}
K_{GL(t,q)}(\psi ; a) = q^{t-1}K_{GL(t-1,q)}(\psi ; a)K(\psi
;a)\\
+ q^{2t-2}(q^{t-1}-1)K_{GL(t-2,q)}(\psi ; a),
\end{multline}
where we understand that $K_{GL(0,q)}(\psi ; a)=1$ . From
(\ref{a20}), in \cite{D1} an explicit expression of the Kloosterman
sum for
$GL(t,q)$ was derived.\\

\begin{theorem}[\cite{D1}]\label{C} For integers $t \geq 1$, and $a \in \fd_q^*$, the
Kloosterman sum $K_{GL(t,q)}(\psi ; a)$ is given by
\begin{align*}
\begin{split}
 K_{GL(t,q)}(\psi ; a)=q^{(t-2)(t+1)/2} &\sum_{l=1}^{[(t+2)/2]} q^l K(\psi;a)^{t+2-2l}\\
                                         & \times \sum \prod_{\nu=1}^{l-1} (q^{j_\nu -2\nu}-1),
\end{split}
\end{align*}
where  $K(\psi;a)$ is the Kloosterman sum and the inner sum is over
all integers $j_1,\ldots,j_{l-1}$ satisfying $2l-1 \leq j_{l-1} \leq
j_{l-2} \leq \cdots \leq j_1 \leq t+1$. Here we agree that the inner
sum is $1$ for $l=1$.
\end{theorem}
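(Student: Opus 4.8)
\proof (proposal)
The plan is to prove the formula by induction on $t$, using the recursion (\ref{a20}) together with the convention $K_{GL(0,q)}(\psi;a)=1$. Writing $K=K(\psi;a)$ and $f_t=K_{GL(t,q)}(\psi;a)$ for brevity, I would first verify the two seed cases directly against the claimed formula: for $t=1$ the sum has only the term $l=1$ and collapses to $q^{-1}\cdot qK=K=K_{GL(1,q)}(\psi;a)$, and for $t=0$ it collapses to $q^{-1}\cdot q=1$, reproducing the convention $f_0=1$. With these anchored, it remains to show that the right-hand side of the formula satisfies the same two-term recursion $f_t=q^{t-1}Kf_{t-1}+q^{2t-2}(q^{t-1}-1)f_{t-2}$ for every $t\ge 2$.

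To organize the bookkeeping, denote by $\Sigma_t(l)$ the inner sum $\sum\prod_{\nu=1}^{l-1}(q^{j_\nu-2\nu}-1)$ over all integers $2l-1\le j_{l-1}\le\cdots\le j_1\le t+1$, with $\Sigma_t(1)=1$, and write the prefactor as $c_t=q^{(t-2)(t+1)/2}$. Substituting the inductive hypotheses for $f_{t-1}$ and $f_{t-2}$ into the recursion, I would treat the two resulting sums separately. In the first term $q^{t-1}Kf_{t-1}$, multiplication by $K$ raises each power $K^{t+1-2l}$ to $K^{t+2-2l}$, so it aligns term-by-term with the index $l$ of the target sum, and a short exponent computation gives $q^{t-1}c_{t-1}=c_t$. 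In the second term $q^{2t-2}(q^{t-1}-1)f_{t-2}$ the powers are $K^{t-2m}$, so the substitution $l=m+1$ aligns them with the target; writing the accompanying $q^{m}=q^{-1}q^{l}$ and absorbing $q^{l}$, a second exponent computation gives $q^{2t-3}c_{t-2}=c_t$. Thus the whole right-hand side equals $c_t\sum_l q^l K^{t+2-2l}\big[\Sigma_{t-1}(l)+(q^{t-1}-1)\Sigma_{t-2}(l-1)\big]$, and the theorem reduces to the single combinatorial identity
\begin{equation*}
\Sigma_t(l)=\Sigma_{t-1}(l)+(q^{t-1}-1)\Sigma_{t-2}(l-1).
\end{equation*}

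To establish this identity I would split the defining sum for $\Sigma_t(l)$ according to the value of the largest index $j_1$. The chains with $j_1\le t$ contribute, by definition, exactly $\Sigma_{t-1}(l)$. For the chains with $j_1=t+1$ the factor indexed by $\nu=1$ equals $q^{(t+1)-2}-1=q^{t-1}-1$ and may be pulled out; the remaining product runs over $j_2,\dots,j_{l-1}$, and the shift $j'_\mu=j_{\mu+1}-2$ converts the exponents $q^{j_{\mu+1}-2(\mu+1)}$ into $q^{j'_\mu-2\mu}$ and the constraints $2l-1\le j_{l-1}\le\cdots\le j_2\le t+1$ into $2l-3\le j'_{l-2}\le\cdots\le j'_1\le t-1$, which is precisely the index set of $\Sigma_{t-2}(l-1)$. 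This gives the identity. I expect the index shift in the case $j_1=t+1$, together with the endpoint matching across the differing ranges $[(t+1)/2]$ and $[t/2]+1=[(t+2)/2]$, to be the only delicate point: one must check that at the extreme values of $l$ the missing summand genuinely vanishes under the stated conventions, i.e.\ $\Sigma_{t-1}(l)=0$ once $2l-1>t$ (an infeasible chain) and the $\Sigma_{t-2}(l-1)$ contribution is absent at $l=1$, so that the combined index range is exactly $1\le l\le[(t+2)/2]$. The exponent arithmetic for the prefactors is then routine.
\qed
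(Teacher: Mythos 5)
Your proposal is correct: the induction on $t$ via the recursion (\ref{a20}) goes through, the exponent checks $q^{t-1}c_{t-1}=c_t$ and $q^{2t-3}c_{t-2}=c_t$ are right, and the splitting of $\Sigma_t(l)$ on whether $j_1\le t$ or $j_1=t+1$ (with the index shift $j'_\mu=j_{\mu+1}-2$ and the vanishing of infeasible chains at the endpoints) is exactly the needed combinatorial identity. This is essentially the same approach as the source: the paper gives no proof but cites \cite{D1} and states explicitly that the formula is derived from the recursion (\ref{a20}), which is precisely the route you take.
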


In Section 5 of \cite{D1}, it is shown that the Gauss sum for
$Sp(2n,q)$ is given by:

\begin{align}\label{a21}
\begin{split}
  &\sum_{w \in Sp(2n,q)} \psi(Tr w)\\
  &=\sum_{r=0}^{n} \sum _{ w \in P \sigma_{r}P}^{} \psi(Tr w)\\
  &=\sum_{r=0}^{n}|A_{r} \backslash P| \sum_{w \in P } \psi(Tr w \sigma_{r})\\
  &=q^{\binom{n+1}{2}}\sum_{r=0}^{n} |A_{r} \backslash P | q^{r(n-r)}a_{r}K_{GL(n-r,q)}( \psi;1).
\end{split}
\end{align}

Here $\psi$ is any nontrivial additive character of $\fd_{q}$,
$a_{0}=1$, and, for $r \in \z_{>0}$, $a_r$ denotes the number of all
$r \times r$ nonsingular alternating matrices over $\fd_q$, which is
given by
\begin{equation}\label{a22}
a_{r}= \begin{cases}
    0, & \hbox{if $r$ is odd,} \\
    q^{\frac{r}{2}(\frac{r}{2}-1)} \prod_{j=1}^{\frac{r}{2}}(q^{2j-1}), & \hbox{if $r$ is even,} \\
\end{cases}
\end{equation}
(cf. \cite{D1},  Proposition 5.1).

Thus we see from (\ref{a15}), (\ref{a21}), and (\ref{a22}) that, for
each $ r$ with $0 \leq r \leq n$,
\begin{equation}\label{a23}
\sum_{w \in P \sigma_{r}P}^{}\psi(Trw)=
\begin{cases}
0, & \hbox{if $r$ is odd,} \\
\begin{split}&q^{\binom{n+1}{2}}q^{rn-\frac{1}{4}r^2}\left[ \substack{n\\r} \right]_q \\
             &\times \prod_{j=1}^{r/2}(q^{2j-1}-1)\\
             &\times K_{GL(n-r,q)}(\psi;1)\end{split}, & \hbox{if $r$ is even.} \\
\end{cases}
\end{equation}

For our purposes, we need two infinite families of exponential sums
in (\ref{a23}) over $P(2n,q) \sigma_{n-1}P(2n,q)=DC^-(n,q)$ for
$n=1,3,5,\cdots$ and over $P(2n,q) \sigma_{n-2}P(2n,q)=DC^+(n,q)$
for $n=2,4,6,\cdots$. So we state them separately as a theorem.\\

\begin{theorem}
 Let $\psi$  be any nontrivial additive character of $\fd_q$. Then, in the notations of (1) and (3), we have
\begin{equation*}
\sum_{w \in DC^{-}(n,q)} \psi(Trw)=A^-(n,q)K(\psi;1), \textmd{for}
\;\; n=1,3,5,\cdots,
\end{equation*}

\begin{align*}
  \begin{split}
   &\sum_{w \in DC^+(n,q)}\psi(Trw)=q^{-1}A^+(n,q)K_{GL(2,q)}(\psi;1)\\
    & =A^+(n,q)(K(\psi;1)^2+q^2-q),\textmd{for} \;\;n=2,4,6,\cdots
  \end{split}
\end{align*}
(cf. (\ref{a23}), (\ref{a20})).\\
\end{theorem}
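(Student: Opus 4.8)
The plan is to obtain both identities as direct specializations of the general formula (\ref{a23}) for $\sum_{w\in P\sigma_r P}\psi(Trw)$, choosing the index $r$ so that $P\sigma_r P$ becomes $DC^-(n,q)$ or $DC^+(n,q)$, and then simplifying the resulting prefactor. The only structural input needed beyond (\ref{a23}) is the recursion (\ref{a20}) for the $GL(t,q)$-Kloosterman sums, which enters only in the symplectic-plus case to rewrite $K_{GL(2,q)}(\psi;1)$.

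For the minus family I would set $r=n-1$, so that $P\sigma_{n-1}P=DC^-(n,q)$. Since $n$ is odd, $r=n-1$ is even, so the nonzero branch of (\ref{a23}) applies, and $n-r=1$ makes the Kloosterman factor $K_{GL(1,q)}(\psi;1)=K(\psi;1)$. It then remains to check that the prefactor $q^{\binom{n+1}{2}}q^{rn-\frac14 r^2}\left[\substack{n\\r}\right]_q\prod_{j=1}^{r/2}(q^{2j-1}-1)$ collapses to $A^-(n,q)$. Collecting the powers of $q$ gives exponent $\frac{n^2+n}{2}+(n-1)n-\frac14(n-1)^2=\frac14(5n^2-1)$, which matches (\ref{a1}); the $q$-binomial symmetry $\left[\substack{n\\n-1}\right]_q=\left[\substack{n\\1}\right]_q$ and the index identity $r/2=(n-1)/2$ take care of the remaining factors.

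For the plus family I would instead set $r=n-2$, so $P\sigma_{n-2}P=DC^+(n,q)$; with $n$ even, $r$ is again even, and now $n-r=2$ produces the factor $K_{GL(2,q)}(\psi;1)$. The same exponent bookkeeping, now yielding $\frac14(5n^2-2n)-1$ together with $\left[\substack{n\\n-2}\right]_q=\left[\substack{n\\2}\right]_q$, identifies the prefactor as $q^{-1}A^+(n,q)$, giving the first displayed equality. For the second equality I would invoke (\ref{a20}) at $t=2$, which, using $K_{GL(0,q)}=1$ and $K_{GL(1,q)}(\psi;1)=K(\psi;1)$, yields $K_{GL(2,q)}(\psi;1)=qK(\psi;1)^2+q^3-q^2=q\bigl(K(\psi;1)^2+q^2-q\bigr)$; multiplying by $q^{-1}A^+(n,q)$ cancels the leading $q$ and produces $A^+(n,q)\bigl(K(\psi;1)^2+q^2-q\bigr)$.

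In both cases the substantive content is simply the parity observation that makes $r$ even, so that the vanishing branch of (\ref{a23}) is avoided, together with the routine but error-prone arithmetic of matching the $q$-exponent to the closed forms (\ref{a1}) and (\ref{a3}). I expect the main place to slip is precisely this exponent accounting, in particular keeping $\binom{n+1}{2}$, the term $rn-\frac14 r^2$, and the half-integer bookkeeping straight, rather than any genuine conceptual difficulty, since no new representation-theoretic or character-sum estimate is required beyond the already-established (\ref{a23}) and (\ref{a20}).
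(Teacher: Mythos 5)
Your proposal is correct and is precisely the argument the paper intends: the theorem is stated as an immediate specialization of (\ref{a23}) at $r=n-1$ (resp.\ $r=n-2$), with the parity of $n$ ensuring the nonvanishing branch, and (\ref{a20}) at $t=2$ supplying $K_{GL(2,q)}(\psi;1)=qK(\psi;1)^2+q^3-q^2$. Your exponent bookkeeping checks out ($\tfrac14(5n^2-1)$ in the minus case and $\tfrac14(5n^2-2n)-1$ in the plus case), matching (\ref{a1}) and $q^{-1}A^+(n,q)$ from (\ref{a3}) exactly as the paper's citation of those equations presupposes.
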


\begin{proposition}[\cite{D3}]\label{E}
For $n=2^s(s \in \mathbb{Z}_{\geq 0})$, and $\psi$  a nontrivial
additive character of $\fd_q$,
\[
K(\psi;a^n) = K(\psi;a).
\]
\end{proposition}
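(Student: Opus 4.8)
The plan is to reduce the assertion to the single case $s=1$ and then to exploit that $x \mapsto x^2$ is the Frobenius automorphism of $\fd_q$, together with the Frobenius-invariance of the absolute trace.

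First I would handle the reduction by induction on $s$. The case $s=0$, i.e.\ $n=1$, is trivial. Granting the case $s=1$, namely $K(\psi;b^2)=K(\psi;b)$ for every $b \in \fd_q^*$, one writes $a^{2^s}=(a^{2^{s-1}})^2$ and applies this with $b=a^{2^{s-1}}$, then repeats, obtaining $K(\psi;a^{2^s})=K(\psi;a^{2^{s-1}})=\cdots=K(\psi;a^2)=K(\psi;a)$. So everything reduces to proving $K(\psi;a^2)=K(\psi;a)$.

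For that case I would begin from the definition $K(\psi;a^2)=\sum_{\alpha \in \fd_q^*}\psi(\alpha+a^2\alpha^{-1})$ and substitute $\alpha=\beta^2$. Since $q=2^r$, the map $x \mapsto x^2$ is a bijection of $\fd_q^*$ (equivalently $\gcd(2,q-1)=1$), so $\beta$ again ranges over $\fd_q^*$ and $\alpha^{-1}=\beta^{-2}$. Using the characteristic-two identity $(\beta+a\beta^{-1})^2=\beta^2+a^2\beta^{-2}$, in which the cross term $2a$ vanishes, the sum becomes $\sum_{\beta \in \fd_q^*}\psi\bigl((\beta+a\beta^{-1})^2\bigr)$.

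The crux is the identity $\psi(y^2)=\psi(y)$ for all $y \in \fd_q$. Writing $\psi=\lambda$ through the trace, this is $\lambda(y^2)=\lambda(y)$, which follows immediately from $tr(y^2)=tr(y)$: expanding, $tr(y^2)=y^2+y^4+\cdots+y^{2^r}=y^2+\cdots+y^{2^{r-1}}+y=tr(y)$, using $y^{2^r}=y$. Substituting this into the displayed sum collapses the square in the exponent and yields $\sum_{\beta \in \fd_q^*}\psi(\beta+a\beta^{-1})=K(\psi;a)$, which finishes the case $s=1$ and hence the proposition. I expect this last identity to be the only genuine obstacle: it is precisely the Frobenius-invariance of the trace that lets the squaring disappear, and it is this property of the canonical character that powers the whole argument.
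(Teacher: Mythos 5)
The paper gives no proof of Proposition \ref{E} at all --- it is quoted from \cite{D3} --- so there is nothing to compare line by line; but your route (reduce to $s=1$, substitute $\alpha=\beta^{2}$ via the Frobenius, and absorb the resulting square into the character using $tr(y^{2})=tr(y)$) is the standard argument, and for the canonical character $\lambda$ it is complete and correct. That is also the only case the paper ever uses: Corollary \ref{G} invokes the proposition precisely for $K(\lambda;a^{2})=K(\lambda;a)$.

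The one genuine gap is the step ``$\psi(y^{2})=\psi(y)$ for all $y$,'' which you justify by ``writing $\psi=\lambda$ through the trace.'' A general nontrivial additive character is $\psi(x)=\lambda(cx)$ for a unique $c\in\fd_q^{*}$, and then $\psi(y^{2})=\lambda(cy^{2})=\lambda(c^{1/2}y)$, which agrees with $\psi(y)=\lambda(cy)$ for all $y$ only when $c^{1/2}=c$, i.e.\ $c=1$. So your key identity holds only for $\psi=\lambda$ --- and in fact the proposition as literally stated is false for other $\psi$. Concretely, take $q=4$, $\fd_4=\fd_2(\omega)$ with $\omega^{2}=\omega+1$, and $\psi(x)=\lambda(\omega x)$: a direct computation gives $K(\psi;\omega)=3$ but $K(\psi;\omega^{2})=-1$, contradicting $K(\psi;a^{2})=K(\psi;a)$ at $a=\omega$. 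The defect is therefore really in the proposition's hypothesis (it should say ``the canonical additive character $\lambda$,'' or be phrased in the form in which it is applied); your proof is the right proof of the true statement, but the silent specialization to $\psi=\lambda$ is exactly the point where the general claim breaks, and it should be made explicit rather than smoothed over.
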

We need a result of Carlitz for the next corollary.\\

\begin{theorem}[\cite{L2}]\label{F}
For the canonical additive character $\lambda$ of $\fd_q$, and $a
\in \fd_{q} ^{*}$,
\begin{equation}\label{a24}
K_{2}(\lambda;a) = K(\lambda;a)^{2}-q.
\end{equation}
\end{theorem}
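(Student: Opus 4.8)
The plan is to establish (\ref{a24}) by a direct manipulation of the two defining character sums, reparametrizing each pair of summation variables by its sum and product and then matching the two results; the only essential input beyond elementary bookkeeping is the collapse $K(\lambda;\rho^{2})=K(\lambda;\rho)$ furnished by Proposition \ref{E}. First I would write
\[
K(\lambda;a)^2=\sum_{x,y\in\fd_q^{*}}\lambda\big((x+y)+a(x^{-1}+y^{-1})\big),\qquad K_2(\lambda;a)=\sum_{u,v\in\fd_q^{*}}\lambda\big((u+v)+a(uv)^{-1}\big),
\]
and substitute $s=x+y,\ p=xy$ in the first sum and $s=u+v,\ p=uv$ in the second. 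Since $q$ is even, for fixed $s\neq 0$ the ordered pairs with given sum $s$ and product $p$ number $2$ when $tr(p/s^{2})=0$ and $0$ otherwise, whereas $s=0$ forces the two variables to coincide. Because $x^{-1}+y^{-1}=s/p$, the summand of $K(\lambda;a)^2$ becomes $\lambda(s+sa/p)$, while that of $K_2(\lambda;a)$ is $\lambda(s+a/p)$.

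I would next isolate the diagonal $s=0$: it contributes $q-1$ to $K(\lambda;a)^2$ (the argument vanishes), and, after the substitution $w=a/u^{2}$, contributes $\sum_{w\in\fd_q^{*}}\lambda(w)=-1$ to $K_2(\lambda;a)$. Setting
\[
D=\sum_{s\neq0}\ \sum_{tr(p/s^{2})=0}\lambda(s+sa/p),\qquad E=\sum_{s\neq0}\ \sum_{tr(p/s^{2})=0}\lambda(s+a/p),
\]
this bookkeeping yields $K(\lambda;a)^2-K_2(\lambda;a)=q+2(D-E)$, so the theorem reduces to the single assertion $D=E$.

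To compare $D$ and $E$ I would first substitute $w=p/s^{2}$ (so that $w\neq0$ and $tr(w)=0$) and then clear $s$: putting $r=sw$ in $D$ and $r=s^{2}w$ in $E$ turns both into sums of the shape $\sum_{r\in\fd_q^{*}}\lambda(a/r)F(\,\cdot\,)$, where $F(c)=\sum_{s\in\fd_q^{*},\,tr(c/s)=0}\lambda(s)$. Writing the indicator of $tr(c/s)=0$ as $\tfrac12(1+\lambda(c/s))$ gives the closed form $F(c)=\tfrac12\big(K(\lambda;c)-1\big)$. Using $tr(z^{2})=tr(z)$ in the form $tr(r/s^{2})=tr(\sqrt r/s)$, with $\sqrt{\cdot}$ the inverse Frobenius, I obtain $D=\sum_{r}\lambda(a/r)F(r)$ and $E=\sum_{r}\lambda(a/r)F(\sqrt r)$.

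The crux, and the only step that is not routine, is the equality $D=E$. Applying the bijection $r=\rho^{2}$ of $\fd_q^{*}$ to each reduced sum, using $\lambda(a/\rho^{2})=\lambda(\sqrt a/\rho)$ (again from $tr(z^{2})=tr(z)$) and, crucially, $K(\lambda;\rho^{2})=K(\lambda;\rho)$ from Proposition \ref{E} with $n=2$, both $D$ and $E$ are carried onto the single sum $\sum_{\rho\in\fd_q^{*}}\lambda(\sqrt a/\rho)F(\rho)$. This characteristic-two invariance of Kloosterman sums under squaring is exactly what forces the a priori different summands $\lambda(s+sa/p)$ and $\lambda(s+a/p)$ to contribute equally; granting it we get $D=E$, and hence $K_2(\lambda;a)=K(\lambda;a)^2-q$.
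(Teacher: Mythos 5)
Your proof is correct, but there is essentially nothing in the paper to compare it against: Theorem \ref{F} is stated with an attribution to Carlitz \cite{L2} and no argument is given in the text, the identity being used purely as an external input. Your proposal therefore supplies a genuinely self-contained derivation, and every step checks out. Writing both $K(\lambda;a)^{2}$ and $K_{2}(\lambda;a)$ as sums over ordered pairs, the passage to the symmetric variables $s,p$ is justified because in characteristic two the polynomial $T^{2}+sT+p$ with $s\neq 0$, $p\neq 0$ has exactly two roots in $\fd_{q}^{*}$ if $tr(p/s^{2})=0$ and none otherwise (Artin--Schreier, cf.\ (\ref{a39})); the diagonal terms contribute $q-1$ and $-1$ as you say; the evaluation $F(c)=\frac{1}{2}\bigl(K(\lambda;c)-1\bigr)$ via the indicator $\frac{1}{2}\bigl(1+\lambda(c/s)\bigr)$ is right; and the identification of $D$ with $E$ under the substitution $r=\rho^{2}$ uses only $tr(z^{2})=tr(z)$ and Proposition \ref{E} with $n=2$, which is proved independently of Theorem \ref{F} (it amounts to the substitution $x\mapsto x^{2}$ inside the Kloosterman sum), so there is no circularity. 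Two minor points should be made explicit: the inner sums defining $D$ and $E$ must carry the restriction $p\neq 0$ (the trace condition alone admits $p=0$, where the summand is undefined and the two-roots count fails since one root of $T^{2}+sT$ is zero), and the same restriction underlies your pair-counting statement; both are forced by your summands, so nothing breaks, and the bookkeeping identity $K(\lambda;a)^{2}-K_{2}(\lambda;a)=q+2(D-E)$ then yields the theorem exactly as you argue. What your route buys, compared with the paper's bare citation, is that it exposes what Carlitz's identity actually rests on in characteristic two, namely the invariance of Kloosterman sums under the Frobenius $a\mapsto a^{2}$; what the citation buys the paper is brevity.
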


The next corollary follows from Theorem 4, Proposition 5,
(\ref{a24}), and
simple change of variables.\\

\begin{corollary}\label{G}
Let $\lambda $ be the canonical additive character of $\fd_{q}$, and
let $a \in \fd_{q}^{*}$. Then we have

\begin{align}\label{a25}
  \begin{split}
\sum_{w \in DC^{-}(n,q)}\lambda(aTr w)=A^-(n,q)&K(\lambda;a),\\
                                                 &\textmd{for} \;\;n=1,3,5, \cdots,
  \end{split}
\end{align}

\begin{align}\label{a26}
  \begin{split}
 \sum_{w \in DC^{+}(n,q)} \lambda(aTrw)&=A^+(n,q)(K(\lambda ;a)^2+q^2-q)\\
                                      &=A^+(n,q)(K_2(\lambda ;a)+q^2),\\
                                      &\textmd{for}\;\;n=2,4,6,\cdots
  \end{split}
\end{align}
(cf. (\ref{a1}), (\ref{a3})).\\
\end{corollary}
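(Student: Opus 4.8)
The plan is to reduce everything to Theorem 4 by recognizing the twisted canonical character $x \mapsto \lambda(ax)$ as a generic nontrivial character, and then to track what that twist does to the Kloosterman sum appearing on the right-hand side. Since $a \in \fd_q^*$, the map $\psi(x) = \lambda(ax)$ is a nontrivial additive character of $\fd_q$, so $\lambda(a\,Tr w) = \psi(Tr w)$ for every $w$. Hence the left-hand sides of (\ref{a25}) and (\ref{a26}) are exactly the sums $\sum_{w \in DC^{\pm}(n,q)} \psi(Tr w)$ evaluated in Theorem 4, giving
\[
\sum_{w \in DC^{-}(n,q)} \lambda(a\,Tr w) = A^-(n,q)K(\psi;1),
\]
together with the analogous identity with $A^+(n,q)(K(\psi;1)^2 + q^2 - q)$ for $DC^+$. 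So the entire problem collapses to identifying $K(\psi;1)$ with $K(\lambda;a)$.

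First I would carry out the change of variables in the definition of the Kloosterman sum. By definition $K(\psi;1) = \sum_{\alpha \in \fd_q^*} \lambda(a\alpha + a\alpha^{-1})$; substituting $\beta = a\alpha$, which is a bijection of $\fd_q^*$, turns the argument into $\beta + a^2\beta^{-1}$, so that $K(\psi;1) = K(\lambda; a^2)$. At this point the characteristic-two structure enters through Proposition \ref{E}: applying it with $n = 2 = 2^1$ yields $K(\lambda; a^2) = K(\lambda; a)$. Combining the two observations gives $K(\psi;1) = K(\lambda;a)$, which substituted into the $DC^-$ identity is precisely (\ref{a25}).

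For (\ref{a26}) the same substitution gives $K(\psi;1)^2 = K(\lambda;a)^2$, hence
\[
\sum_{w \in DC^{+}(n,q)} \lambda(a\,Tr w) = A^+(n,q)\bigl(K(\lambda;a)^2 + q^2 - q\bigr),
\]
which is the first form claimed. To obtain the second form I would invoke Carlitz's identity (\ref{a24}), namely $K_2(\lambda;a) = K(\lambda;a)^2 - q$, so that $K(\lambda;a)^2 + q^2 - q = K_2(\lambda;a) + q^2$.

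The argument is essentially bookkeeping once Theorem 4 is in hand; the only step that is not a purely formal manipulation is the passage $K(\lambda;a^2) = K(\lambda;a)$, which is exactly where Proposition \ref{E} — and hence the hypothesis $q = 2^r$ — is used. I would take particular care to check that $\beta = a\alpha$ really produces $a^2$ rather than $a$ in the inverse term, since that single squaring is what makes Proposition \ref{E} the relevant tool and not merely a trivial rescaling.
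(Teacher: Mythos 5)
Your proposal is correct and follows exactly the route the paper indicates: it deduces the corollary from Theorem 4 by taking $\psi(x)=\lambda(ax)$, performs the substitution $\beta=a\alpha$ to get $K(\psi;1)=K(\lambda;a^2)$, applies Proposition \ref{E} to conclude $K(\lambda;a^2)=K(\lambda;a)$, and uses Carlitz's identity (\ref{a24}) for the second form of (\ref{a26}). The paper merely cites ``Theorem 4, Proposition 5, (\ref{a24}), and simple change of variables''; you have correctly supplied those details, including the key point that the inverse term acquires $a^2$ rather than $a$.
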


\begin{proposition}[\cite{D3}]\label{H}
Let $\lambda$ be the canonical additive character of $ \fd_{q}$, $ m
\in \z_{>0}$, $\beta \in \fd_{q}$. Then

\begin{align}\label{a27}
  \begin{split}
 &\sum_{a \in \fd_{q}^{*}} \lambda(-a \beta)K_m (\lambda ;a)\\
 &=\begin{cases}
    qK_{m-1}(\lambda ;\beta^{-1})+(-1)^{m+1}, & \hbox{if $\beta \neq 0$,} \\
      (-1)^{m+1}, & \hbox{if $\beta = 0$,} \\
\end{cases}
\end{split}
\end{align}
with the convention $K_{0}(\lambda ; \beta^{-1})= \lambda(\beta
^{-1})$.
\end{proposition}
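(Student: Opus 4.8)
The plan is to unfold the definition of the $m$-dimensional Kloosterman sum and interchange the order of summation, reducing everything to a single orthogonality relation for the additive character. Writing $K_m(\lambda;a)=\sum_{\alpha_1,\dots,\alpha_m\in\fd_q^*}\lambda(\alpha_1+\cdots+\alpha_m+a\alpha_1^{-1}\cdots\alpha_m^{-1})$, I would substitute this into the left-hand side of (\ref{a27}) and pull the sum over $a$ inside, so that the $a$-dependence is isolated in the factor $\sum_{a\in\fd_q^*}\lambda\bigl(a(\alpha_1^{-1}\cdots\alpha_m^{-1}-\beta)\bigr)$.

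First I would evaluate this inner sum. Since $\sum_{a\in\fd_q}\lambda(ax)$ equals $q$ when $x=0$ and $0$ otherwise, subtracting the $a=0$ term gives $\sum_{a\in\fd_q^*}\lambda(ax)=q-1$ when $x=0$ and $-1$ when $x\neq 0$; equivalently this inner sum equals $q\,[\alpha_1^{-1}\cdots\alpha_m^{-1}=\beta]-1$. Feeding this back in splits the whole expression into two pieces: a ``$-1$'' piece $-\sum_{\alpha_1,\dots,\alpha_m\in\fd_q^*}\lambda(\alpha_1+\cdots+\alpha_m)$, and a ``$q$'' piece restricted to the constraint $\alpha_1^{-1}\cdots\alpha_m^{-1}=\beta$.

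The ``$-1$'' piece factors as $-\bigl(\sum_{\alpha\in\fd_q^*}\lambda(\alpha)\bigr)^m=-(-1)^m=(-1)^{m+1}$, using that $\sum_{\alpha\in\fd_q^*}\lambda(\alpha)=-1$; this accounts for the $(-1)^{m+1}$ term present in both cases of (\ref{a27}). For the ``$q$'' piece I would distinguish the two cases. If $\beta=0$ the constraint $\alpha_1^{-1}\cdots\alpha_m^{-1}=0$ is never satisfiable over $\fd_q^*$, so this piece vanishes and only $(-1)^{m+1}$ remains, giving the second case. If $\beta\neq0$ the constraint reads $\alpha_1\cdots\alpha_m=\beta^{-1}$; solving for the last variable as $\alpha_m=\beta^{-1}\alpha_1^{-1}\cdots\alpha_{m-1}^{-1}$ and letting $\alpha_1,\dots,\alpha_{m-1}$ range freely over $\fd_q^*$ turns the constrained sum into $\sum_{\alpha_1,\dots,\alpha_{m-1}\in\fd_q^*}\lambda(\alpha_1+\cdots+\alpha_{m-1}+\beta^{-1}\alpha_1^{-1}\cdots\alpha_{m-1}^{-1})$, which is exactly $K_{m-1}(\lambda;\beta^{-1})$, yielding the $qK_{m-1}(\lambda;\beta^{-1})$ term.

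The only real subtlety, and the step I would check most carefully, is the boundary case $m=1$: there the ``$q$'' piece reduces to the single term $q\lambda(\beta^{-1})$ (coming from $\alpha_1=\beta^{-1}$), which is forced to agree with $qK_{m-1}(\lambda;\beta^{-1})$ precisely by the stated convention $K_0(\lambda;\beta^{-1})=\lambda(\beta^{-1})$. Verifying that the reindexing $\alpha_m=\beta^{-1}\alpha_1^{-1}\cdots\alpha_{m-1}^{-1}$ is a genuine bijection between $(\fd_q^*)^{m-1}$ and the constraint set, together with this degenerate $m=1$ check, is the main bookkeeping obstacle, the remainder being the standard character orthogonality.
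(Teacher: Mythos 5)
Your proof is correct and complete: the unfolding of $K_m(\lambda;a)$, the orthogonality evaluation $\sum_{a\in\fd_q^*}\lambda(ax)=q\,[x=0]-1$, the factorization of the ``$-1$'' piece into $(-1)^{m+1}$, and the bijective reindexing $\alpha_m=\beta^{-1}\alpha_1^{-1}\cdots\alpha_{m-1}^{-1}$ (with the $m=1$ boundary case absorbed by the convention $K_0(\lambda;\beta^{-1})=\lambda(\beta^{-1})$) all check out. Note that this paper gives no proof at all --- Proposition~\ref{H} is imported from \cite{D3} as a citation --- so there is nothing in-paper to compare against; your argument is the standard character-orthogonality computation that establishes this identity, and it stands on its own.
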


For any integer $r$ with $ 0 \leq r \leq n$, and each $\beta \in
\fd_{q}$, we let
\begin{equation*}
N_{P \sigma_{r} P}(\beta)=|\{w \in P \sigma_r P |Tr w= \beta \}|.
\end{equation*}
Then it is easy to see that

\begin{equation}\label{a28}
qN_{P\sigma_{r}P}(\beta)=|P \sigma_{r}P|+\sum_{a \in \fd_{q}^{*}}
\lambda(-a \beta) \sum_{w \in P \sigma_r P }\lambda(a Tr w).
\end{equation}
Now, from (\ref{a25})-(\ref{a28}), (\ref{a17}), and (\ref{a18}), we
have the following result.\\

\begin{proposition}\label{I}
(a) For $n=1,3,5, \cdots $
\begin{align}\label{a29}
  \begin{split}
&N_{DC^-(n,q)}(\beta)\\
&=q^{-1}A^-(n,q)B^-(n,q)+q^{-1}A^-(n,q)\\
& \times \begin{cases} 1,& \beta=0,\\
q+1, & tr(\beta^{-1})=0,\\
-q+1,&tr( \beta^{-1})=1.
\end{cases}
  \end{split}
\end{align}

(b) For $n=2,4,6, \cdots,$
\begin{align}\label{a30}
  \begin{split}
 &N_{DC^{+}(n,q)}(\beta) \\
 &=q^{-1}A^{+}(n,q)B^{+}(n,q)+q^{-1}A^{+}(n,q)\\
 & \times \begin{cases} qK(\lambda ; \beta^{-1})-q^{2}-1,& \beta \neq 0,\\
 q^3-q^2-1,& \beta=0.
 \end{cases}
  \end{split}
\end{align}
(cf. (\ref{a1})- (\ref{a4})).\\
\end{proposition}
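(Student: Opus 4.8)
The plan is to specialize the sieving identity (\ref{a28}) to the two relevant double cosets and then substitute the closed-form exponential sums already recorded in Corollary \ref{G}. For part (a) I take $r=n-1$, so that $P\sigma_r P=DC^-(n,q)$, and (\ref{a28}) reads
\[
qN_{DC^-(n,q)}(\beta)=|DC^-(n,q)|+\sum_{a\in\fd_q^*}\lambda(-a\beta)\sum_{w\in DC^-(n,q)}\lambda(aTrw).
\]
Feeding in (\ref{a25}) replaces the inner sum by $A^-(n,q)K(\lambda;a)$, so the whole $a$-sum becomes $A^-(n,q)\sum_{a\in\fd_q^*}\lambda(-a\beta)K(\lambda;a)$. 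Since $K=K_1$, I apply Proposition \ref{H} with $m=1$: the sum equals $q\lambda(\beta^{-1})+1$ for $\beta\neq0$ and $1$ for $\beta=0$, using the convention $K_0(\lambda;\beta^{-1})=\lambda(\beta^{-1})$ together with $(-1)^{m+1}=1$. Writing $\lambda(\beta^{-1})=(-1)^{tr(\beta^{-1})}$ then splits the case $\beta\neq0$ into the two trace values $q+1$ and $-q+1$, which is exactly the bracket in (\ref{a29}).

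For part (b) I take $r=n-2$, so $P\sigma_r P=DC^+(n,q)$, and substitute (\ref{a26}), which gives $\sum_{w}\lambda(aTrw)=A^+(n,q)(K_2(\lambda;a)+q^2)$. I then split the resulting $a$-sum into a $K_2$ piece and a constant piece. The $K_2$ piece is handled by Proposition \ref{H} with $m=2$, giving $qK(\lambda;\beta^{-1})-1$ for $\beta\neq0$ and $-1$ for $\beta=0$, while the constant piece $q^2\sum_{a\in\fd_q^*}\lambda(-a\beta)$ is evaluated by the elementary orthogonality of $\lambda$, namely $q^2(q-1)$ for $\beta=0$ and $-q^2$ for $\beta\neq0$. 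Adding the two pieces yields $qK(\lambda;\beta^{-1})-q^2-1$ and $q^3-q^2-1$ respectively, matching the bracket in (\ref{a30}).

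What remains in both parts is to recognize the constant term, namely to check that $|DC^-(n,q)|=A^-(n,q)B^-(n,q)$ and $|DC^+(n,q)|=A^+(n,q)B^+(n,q)$; dividing (\ref{a28}) through by $q$ then produces the stated $q^{-1}A^\pm B^\pm$ leading terms. This is the one genuinely computational step, comparing the orders (\ref{a17}), (\ref{a18}) against the definitions (\ref{a1})--(\ref{a4}). The $q$-powers agree after a short arithmetic check (for instance $\tfrac14(5n^2-1)+\tfrac14(n-1)^2=\tfrac12 n(3n-1)$ in the minus case), the $q$-binomial factors coincide, and the interleaved products telescope via $\prod_{j=1}^{m}(q^{2j-1}-1)(q^{2j}-1)=\prod_{k=1}^{2m}(q^k-1)$; taking $m=(n-1)/2$ and attaching the leftover factor $q^n-1$ recovers $\prod_{k=1}^n(q^k-1)$ in the minus case, and similarly $m=(n-2)/2$ with the two leftover factors $(q^n-1)(q^{n-1}-1)$ in the plus case. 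I expect this product bookkeeping, rather than any conceptual difficulty, to be the main obstacle, since everything else is a direct substitution of results already established.
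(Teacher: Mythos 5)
Your proposal is correct and follows essentially the same route as the paper, which derives Proposition \ref{I} directly from the counting identity (\ref{a28}), the exponential sums (\ref{a25})--(\ref{a26}), the character-sum evaluation (\ref{a27}) with $m=1,2$, and the order formulas (\ref{a17})--(\ref{a18}); your arithmetic verification that $|DC^{\mp}(n,q)|=A^{\mp}(n,q)B^{\mp}(n,q)$ is the same bookkeeping the paper records in (\ref{a33})--(\ref{a34}). All the individual evaluations (the $m=1$ and $m=2$ cases of Proposition \ref{H}, the orthogonality of $\lambda$ for the $q^2$ term, and the telescoping of the interleaved products) check out.
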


\begin{corollary}\label{J}
(a) For all odd $n \geq 3$ and all $q$, $N_{DC^{-}(n,q)}(\beta)> 0$,
for all $\beta$;for $n=1 $ and all $q$,

\begin{equation}\label{a31}
N_{DC^{-}(1,q)}(\beta)
 =\begin{cases}
    q, & \beta =0 ,\\
     2q , & tr(\beta^{-1})=0, \\
      0, & tr(\beta^{-1}=1.
\end{cases}
\end{equation}

(b) For all even $ n \geq 4$ and all $ q$, or $ n=2$ and all $ q
\geq 4$, $N_{DC^{+}(n,q)}(\beta)> 0$, for all $\beta$;for $n=2$ and
$q=2$

\begin{equation*}\label{}
N_{DC^{+}(2,2)}(\beta)
 =\begin{cases}
   0 , & \beta =1,\\
    48=|P(4,2)| ,& \beta=0.
\end{cases}
\end{equation*}
\end{corollary}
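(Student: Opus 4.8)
The plan is to derive Corollary \ref{J} directly from the explicit formulas for $N_{DC^{\pm}(n,q)}(\beta)$ in Proposition \ref{I}, by identifying exactly when the expressions there become nonpositive. In each case the count has the form $q^{-1}A^{\pm}(n,q)B^{\pm}(n,q)$ plus a correction term $q^{-1}A^{\pm}(n,q)$ times a case-dependent factor. Since $A^{\pm}(n,q)>0$ always, the sign behavior is controlled entirely by the bracketed factors and by the relative size of $B^{\pm}(n,q)$. So first I would factor out $q^{-1}A^{\pm}(n,q)$ and reduce each positivity claim to checking whether $B^{\pm}(n,q)$ plus the correction is positive.

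For part (a), the smallest correction occurs in the case $tr(\beta^{-1})=1$, where the bracketed factor is $-q+1$, so I must verify $B^{-}(n,q)-q+1>0$, i.e.\ $B^{-}(n,q)\geq q$. From (\ref{a2}), $B^{-}(n,q)=q^{\frac14(n-1)^2}(q^n-1)\prod_{j=1}^{(n-1)/2}(q^{2j}-1)$. For odd $n\geq 3$ the product is nonempty and already forces $B^{-}(n,q)$ to exceed $q$ comfortably for all $q\geq 2$, which gives strict positivity in every case and hence $N_{DC^{-}(n,q)}(\beta)>0$. For the exceptional value $n=1$, the product in (\ref{a2}) is empty and the prefactor $q^{\frac14(n-1)^2}=q^0=1$, so $B^{-}(1,q)=q-1$. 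Substituting $B^{-}(1,q)=q-1$ and $A^{-}(1,q)$ (from (\ref{a1}), which reduces to $q(q-1)$ up to the $q$-binomial $\left[\substack{1\\1}\right]_q=1$) into (\ref{a29}) and simplifying the three cases yields exactly the piecewise values $q$, $2q$, and $0$ claimed in (\ref{a31}); the $tr(\beta^{-1})=1$ case collapses to zero precisely because $B^{-}(1,q)-q+1=0$.

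For part (b), the analysis hinges on the Weil bound (\ref{a19}): in (\ref{a30}) the bracketed factor for $\beta\neq 0$ is $qK(\lambda;\beta^{-1})-q^2-1$, whose worst case is $K(\lambda;\beta^{-1})=-2\sqrt q$, giving the requirement $B^{+}(n,q)-2q\sqrt q-q^2-1>0$. From (\ref{a4}), $B^{+}(n,q)=q^{\frac14(n-2)^2}(q^n-1)(q^{n-1}-1)\prod_{j=1}^{(n-2)/2}(q^{2j}-1)$. For even $n\geq 4$ this is large enough to dominate $2q\sqrt q+q^2+1$ for every $q\geq 2$, giving strict positivity. The delicate boundary is $n=2$, where the product is empty and the prefactor is $q^0=1$, so $B^{+}(2,q)=(q^2-1)(q-1)$; here I would check that $(q^2-1)(q-1)-2q\sqrt q-q^2-1>0$ holds for $q\geq 4$ but fails at $q=2$. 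At $q=2$ one computes directly from (\ref{a30}) that $N_{DC^{+}(2,2)}(\beta)$ vanishes when $\beta=1$ (where $K(\lambda;1)=-2$ attains the Weil minimum) and equals $|P(4,2)|=48$ when $\beta=0$, matching the stated exception.

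The main obstacle will be the $n=2$, $q=2$ boundary computation: here the two terms in (\ref{a30}) nearly cancel, so the verification that $N_{DC^{+}(2,2)}(1)=0$ requires the exact Kloosterman value $K(\lambda;1)=-2\sqrt2\cdot\sqrt2/\sqrt2$—more precisely, that the Weil bound is attained—rather than a crude estimate, and one must confirm that $\beta=1$ is the only $\beta$ with $tr(\beta^{-1})$ forcing this extremal Kloosterman value. Everywhere else the estimates are loose, so the remaining cases reduce to routine size comparisons of the explicit products in (\ref{a1})--(\ref{a4}).
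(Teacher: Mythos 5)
Your overall strategy coincides with the paper's: read the worst case off the explicit formulas (\ref{a29})--(\ref{a30}), factor out $q^{-1}A^{\mp}(n,q)$, and compare $B^{\mp}(n,q)$ against the most negative bracketed correction, invoking the Weil bound (\ref{a19}) in case (b). That reduction is sound, and the size comparisons for odd $n\geq 3$ and even $n\geq 4$ go through as you describe (the paper's displayed inequalities are exactly $q^{-1}A^{-}(n,q)\bigl(B^{-}(n,q)-q+1\bigr)>0$ and $q^{-1}A^{+}(n,q)\bigl(B^{+}(n,q)-q^{2}-2q^{3/2}-1\bigr)>0$ written out). However, both of your boundary computations --- which are the actual content of the corollary --- contain errors. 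First, $A^{-}(1,q)=q^{\frac14(5\cdot 1^2-1)}\left[\substack{1\\1}\right]_q=q$, not $q(q-1)$; it is $N^{-}(1,q)=A^{-}(1,q)B^{-}(1,q)$ that equals $q(q-1)$. With your value, (\ref{a29}) would yield $(q-1)^2+(q-1)\{1,\,q+1,\,1-q\}=\{q(q-1),\,2q(q-1),\,0\}$, not the claimed $\{q,\,2q,\,0\}$. The correct substitution $q^{-1}A^{-}(1,q)=1$, $B^{-}(1,q)=q-1$ gives $(q-1)+\{1,\,q+1,\,1-q\}$, which is (\ref{a31}).

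Second, and more seriously, your verification that $N_{DC^{+}(2,2)}(1)=0$ is wrong. Over $\fd_2$ one computes directly $K(\lambda;1)=\lambda(1+1^{-1})=\lambda(0)=+1$; it is not $-2$, and certainly not $-2\sqrt{2}$, which is not even an integer. The vanishing in (\ref{a32}), namely $N_{DC^{+}(2,2)}(1)=q^{4}\{q^{2}-2q-1+K(\lambda;1)\}=16\{4-4-1+1\}=0$, comes from the exact value $K(\lambda;1)=1$ cancelling $q^{2}-2q-1=-1$, not from the Weil bound being attained; with your value $-2$ you would get $16\cdot(-3)=-48$, which is absurd for a cardinality. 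Your further worry about ``other $\beta$ forcing the extremal value'' is vacuous since $\fd_2^{*}=\{1\}$. Both slips are repairable, but as written neither of the two exceptional cases that the corollary singles out is actually established by your argument.
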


\begin{proof}
(a) $n=1 $ case follows directly from (\ref{a29}). Let $ n \geq 3$
be odd. Then, from (\ref{a29}), we see that, for any $\beta$,

\begin{align*}
  \begin{split}
 &N_{DC^{-}(n,q)}(\beta)\\
 &\geq q^{\frac{1}{2}(3n^2 -n-2)}(q^n-1)\prod_{j=2}^n(q^j-1)-q^{\frac{5}{4}(n^2-1)}\\
                                                                      & \times \prod_{j=1}^{(n+1)/2}(q^{2j-1}-1)\\
                        &> q^{\frac{1}{2}(3n^2-n-2)}(q^n -1 )\prod_{j=2}^n(q^j-1)-q^{\frac{5}{4}(n^2-1)}\prod_{j=1}^{(n+1)/2}q^{2j-1}\\
                        &= q^{\frac{1}{2}(3n^2-n-2)}\{(q^n -1)(\prod_{j=2}^n(q^j-1)-1)-1 \} >0.
  \end{split}
\end{align*}

(b) Let $n=2$. Let $\beta \neq 0$. Then, from (\ref{a30}), we have
\begin{equation}\label{a32}
N_{DC^+(2,q)}(\beta)=q^4 \{q^2-2q-1 + K(\lambda ; \beta^{-1})\},
\end{equation}
where $q^2-2q-1+ K(\lambda; \beta^{-1}) \geq q^2-2q-1-2 \sqrt{q}>0$,
for $q \geq 4$, by invoking the Weil bound in (\ref{a19}). Also,
observe from (\ref{a32}) that $N_{DC^+(2,2)}(1)=0$.

On the other hand, if $\beta =0 $, then, from (\ref{a30}), we get
\begin{equation*}
N_{DC^{+}(2,q)}(0)=q^4(2q^2-2q-1 )>0, \textmd{for all}\;\;q \geq 2.
\end{equation*}
In addition, we note that $ N_{DC^{+}(2,2)}(0)=48$.

Assume now that $n \geq 4 $. If $\beta=0$, then, from (\ref{a30}),
we see that $N_{DC^{+}(n,q)}(0)>0$, for all $q$. Let $\beta \neq 0$.
Then, again by invoking the Weil bound,
\begin{align*}
  \begin{split}
 &N_{DC^+ (n,q)}(\beta) \geq q^{-1}A^+(n,q)\\
 & \times \{(q^n-1)(q^{n-1}-1)q^{\frac{n^2}{4}-n +1}\\
  &\times \prod_{j=1}^{(n-2)/2}(q^{2j}-1)-(q^2 +2q^{\frac{3}{2}}+1)\}.
  \end{split}
\end{align*}

Clearly, $\prod_{j=1}^{(n-2)/2} (q^{2j}-1)>1$. So we only need to
show, for all $ q \geq 2$,

\begin{align*}
  \begin{split}
 f(q)=&(q^n-1)(q^{n-1}-1)q^{\frac{n^2}{4}-n+1}\\
      &-(q^2+2q^{\frac{3}{2}}+1)>0.
  \end{split}
\end{align*}
But, as $ n \geq 4 $, $f(q) \geq q(q^4
-1)(q^3-1)-(q^2+2q^{\frac{3}{2}}+1)>0$, for all $q \geq 2$.
\end{proof}
\section{Construction of codes}
Let
\begin{align}\label{a33}
  \begin{split}
N^{-}(n,q)=&|DC^{-}(n,q)|=A^{-}(n,q)B^{-}(n,q),\\
            &\textmd{for}\;\; n=1,3,5, \cdots,
  \end{split}
\end{align}

\begin{align}\label{a34}
  \begin{split}
N^{+}(n,q)=&|DC^{+}(n,q)|=A^{+}(n,q)B^{+}(n,q),\\
            &\textmd{for}\;\; n=2,4,6, \cdots
  \end{split}
\end{align}
(cf. (\ref{a17}), (\ref{a18}), (\ref{a1})-(\ref{a4})).

Here we will construct two infinite families of binary linear codes
$C(DC^{-}(n,q))$ of length $N^-(n,q)$ for all positive odd integers
$n$ and all $q$, and $C(DC^{+}(n,q))$ of length $N^{+}(n,q)$ for all
positive even integers $n$ and all $q$, respectively associated with
the double cosets $DC^{-}(n,q)$ and $DC^{+}(n,q)$.

Let $g_{1}, g_{2}, \cdots, g_{N^{-}(n,q)}$ and $
g_{1},g_{2},\cdots,g_{N^{+}(n,q)}$ be respectively fixed orderings
of the elements in $DC^{-}(n,q) (n=1,3,5,\cdots)$ and $DC^+
(n,q)(n=2,4,6,\cdots )$, by abuse of notations. Then we put
\begin{align*}
  \begin{split}
 v^{-}(n,q)=(Trg_{1},Trg_{2},& \cdots,Trg_{N^{-}(n,q)}) \in \fd_q^{N^-(n,q)},\\
                             &\textmd{for} \;\;n=1,3,5, \cdots,
 \end{split}
\end{align*}

\begin{align*}
  \begin{split}
 v^{+}(n,q)=(Trg_{1},Trg_{2},& \cdots,Trg_{N^+(n,q)}) \in \fd_q^{N^+(n,q)},\\
                              &\textmd{for}\;\; n=2,4,6,\cdots.
  \end{split}
\end{align*}

Now, the binary codes $C(DC^-(n,q))$ and $ C(DC^+(n,q))$ are defined
as:
\begin{align}\label{a35}
  \begin{split}
 C(DC^-(n,q))=\{u \in \fd_2^{ N^-(n,q)}|& u \cdot v^-(n,q)=0 \},\\
                                        &\textmd{for}\;\; n=1,3,5, \cdots ,
  \end{split}
\end{align}

\begin{align}\label{a36}
  \begin{split}
 C(DC^+(n,q))=\{u \in \fd_2^{N^+(n,q)}|& u \cdot v^+(n,q)=0\},\\
                                       &\textmd{for}\;\; n=2,4,6, \cdots,
  \end{split}
\end{align}
where the dot denotes the usual inner product in $\fd_q^{N^-(n,q)}$
and $\fd_q^{N^+(n,q)}$, respectively.

The following Delsarte's theorem is well-known.\\

\begin{theorem}[\cite{FN}]\label{K}
Let $B$ be a linear code over $ \fd_{q}$. Then
\begin{equation*}
(B|_{\fd_{2}})^{\bot }=tr(B^{\bot}).
\end{equation*}
\end{theorem}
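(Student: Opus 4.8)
The plan is to identify the two sides by proving one (routine) inclusion directly and then checking that both have the same $\fd_2$-dimension. Write $N$ for the length of the code and $T:\fd_q^N\to\fd_2^N$ for the coordinatewise trace map, so that $tr(B^{\bot})=T(B^{\bot})$; let $\bot$ denote the ordinary dual taken over $\fd_q$ for $\fd_q$-codes and over $\fd_2$ for $\fd_2$-codes. The tool I would lean on is the $\fd_2$-bilinear form $\langle x,y\rangle=tr(x\cdot y)$ on $\fd_q^N$, where $x\cdot y=\sum_i x_iy_i\in\fd_q$. Since the trace form $tr(\alpha\gamma)$ on $\fd_q$ is non-degenerate, so is $\langle\cdot,\cdot\rangle$ on the $rN$-dimensional $\fd_2$-space $\fd_q^N$; moreover, for any $\fd_q$-subspace $V$ its $\langle\cdot,\cdot\rangle$-orthogonal complement $V^{\sharp}$ equals the $\fd_q$-dual $V^{\bot}$ (the inclusion $V^{\bot}\subseteq V^{\sharp}$ is clear, and the reverse follows from $tr(\alpha(c\cdot b))=0$ for all $\alpha\in\fd_q$ together with non-degeneracy of the trace form on $\fd_q$).

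First I would establish $tr(B^{\bot})\subseteq(B|_{\fd_2})^{\bot}$. For $c\in B^{\bot}$ and $u\in B|_{\fd_2}=B\cap\fd_2^N$ we have $c\cdot u=0$ in $\fd_q$, hence $tr(c\cdot u)=0$; as each $u_i\in\fd_2$ and $tr$ is $\fd_2$-linear, $tr(c\cdot u)=\sum_i u_i\,tr(c_i)=T(c)\cdot u$ in $\fd_2$, so $T(c)\in(B|_{\fd_2})^{\bot}$. This is the easy half and requires nothing beyond $\fd_2$-linearity of $tr$.

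For the reverse inclusion I would not argue directly but instead prove the dimension identity
\[
\dim_{\fd_2}tr(V)+\dim_{\fd_2}(V^{\bot}\cap\fd_2^N)=N
\]
for every $\fd_q$-subspace $V$; taking $V=B^{\bot}$ and using $(B^{\bot})^{\bot}=B$ gives $\dim_{\fd_2}tr(B^{\bot})=N-\dim_{\fd_2}(B|_{\fd_2})=\dim_{\fd_2}(B|_{\fd_2})^{\bot}$, and with the inclusion above this forces equality. To prove the displayed identity I would (i) observe $\ker T=(\fd_2^N)^{\sharp}$, because $\langle x,u\rangle=\sum_i u_i\,tr(x_i)$ vanishes for all $u\in\fd_2^N$ precisely when every $tr(x_i)=0$; (ii) apply rank--nullity to $T|_V$, giving $\dim_{\fd_2}tr(V)=r\dim_{\fd_q}V-\dim_{\fd_2}(V\cap\ker T)$; and (iii) compute $\dim_{\fd_2}(V\cap\ker T)$ from $(V\cap\ker T)^{\sharp}=V^{\sharp}+\fd_2^N=V^{\bot}+\fd_2^N$ and the non-degenerate complement law $\dim_{\fd_2}W^{\sharp}=rN-\dim_{\fd_2}W$, then substitute and cancel.

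The genuinely delicate point is step (iii). The restriction of $\langle\cdot,\cdot\rangle$ to $\fd_2^N$ is $r$ times the standard $\fd_2$-inner product and is therefore degenerate when $r$ is even, so one must resist identifying the $\fd_2$-dual appearing in the statement with $\langle\cdot,\cdot\rangle$-orthogonality. All $\sharp$-complements must be taken inside the full non-degenerate space $\fd_q^N$ and only converted back to the statement's $\fd_2$-dual through $V^{\sharp}=V^{\bot}$ and the rank--nullity bookkeeping; keeping these two pairings apart is the crux of the argument. As this is Delsarte's classical theorem one could instead simply cite \cite{FN}, but the argument above is self-contained and short.
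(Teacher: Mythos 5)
Your argument is correct, but there is nothing in the paper to compare it against: this is Delsarte's classical duality theorem, which the paper states as ``well-known'' and imports from MacWilliams--Sloane \cite{FN} without proof. Your self-contained proof is sound. The identification $V^{\sharp}=V^{\bot}$ for $\fd_q$-subspaces via non-degeneracy of the trace form on $\fd_q$ is right; the containment $tr(B^{\bot})\subseteq(B|_{\fd_2})^{\bot}$ is the standard easy half (using $tr(c\cdot u)=\sum_i u_i\,tr(c_i)$ for $u_i\in\fd_2$); and the dimension count closes the gap: from $\ker T=(\fd_2^N)^{\sharp}$, rank--nullity on $T|_V$, and $(V\cap\ker T)^{\sharp}=V^{\bot}+\fd_2^N$, substituting $\dim_{\fd_2}V^{\bot}=r(N-\dim_{\fd_q}V)$ makes everything cancel to $\dim_{\fd_2}tr(V)=N-\dim_{\fd_2}(V^{\bot}\cap\fd_2^N)$, and taking $V=B^{\bot}$ with $(B^{\bot})^{\bot}=B$ gives equality of dimensions, hence of the two codes. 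You are also right to flag the one genuinely delicate point: the restriction of $\langle x,y\rangle=tr(x\cdot y)$ to $\fd_2^N$ equals $tr(1)=r\bmod 2$ times the standard $\fd_2$-inner product and is identically zero when $r$ is even, so the $\fd_2$-dual in the statement cannot be replaced by $\sharp$-orthogonality inside $\fd_2^N$; your bookkeeping keeps the two pairings correctly separated by always taking $\sharp$-complements in the full space $\fd_q^N$ and computing $\dim_{\fd_2}(B|_{\fd_2})^{\bot}=N-\dim_{\fd_2}(B|_{\fd_2})$ directly. What your write-up buys over the paper's citation is a complete, characteristic-independent argument (it works verbatim with $\fd_p\subseteq\fd_{p^r}$ in place of $\fd_2\subseteq\fd_{2^r}$); what the citation buys is brevity, since the theorem plays only an auxiliary role in identifying the duals $C(DC^{\mp}(n,q))^{\bot}$ in (\ref{a37})--(\ref{a38}).
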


In view of this theorem, the duals  $C(DC^-(n,q))^{\bot}$ and
$C(DC^+(n,q))^{\bot}$ of the respective codes $C(DC^-(n,q))$ and
$C(DC^+(n,q))$ are given by

\begin{align}\label{a37}
  \begin{split}
 &C(DC^{-}(n,q))^{\bot} \\
 &= \{c^{-}(a)=c^{-}(a;n,q)\\
 &\qquad\quad\quad=(tr(aTrg_{1}), \cdots,tr(aTrg_{N^{-}(n,q)}))|a \in \fd_q \}\\
 &(n=1,3,5,\cdots),
  \end{split}
\end{align}

\begin{align}\label{a38}
  \begin{split}
 &C(DC^{+}(n,q))^{\bot}\\
  &=\{c^{+}(a)=c^{+}(a;n,q)\\
  &\qquad\quad\quad=(tr(aTrg_{1}),\cdots,tr(aTrg_{N^{+}(n,q)})) |a \in \fd_{q}\}\\
  &(n=2,4,6,\cdots).
  \end{split}
\end{align}

Let  $\fd_2^+,\fd_q^+$ denote the additive groups of the fields
$\fd_2,\fd_q$, respectively. Then we have the following exact
sequence of groups:
\begin{equation*}
0 \rightarrow \fd_2^+ \rightarrow \fd_q^+ \rightarrow \Theta(\fd_q)
\rightarrow 0,
\end{equation*}
where the first map is the inclusion and the second one is  the
Artin-Schreier operator in characteristic two given by $x \mapsto
\Theta(x) = x^2+x$. So
\begin{equation}\label{a39}
\Theta(\fd_q) = \{\alpha^2 + \alpha \mid  \alpha \in \fd_q \},~
\textmd{and} ~~[\fd_q^+ : \Theta(\fd_q)] = 2.
\end{equation}

\begin{theorem}[\cite{D3}]\label{L}
Let $\lambda$  be the canonical additive character of $\fd_q$, and
let $\beta \in \fd_q^*$. Then
\begin{equation}\label{a40}
 (a) \sum_{\alpha \in
 \fd_q-\{0,1\}}\lambda(\frac{\beta}{\alpha^2+\alpha})=K(\lambda;\beta)-1,
 \qquad \qquad \qquad \qquad \qquad \qquad \qquad \qquad
\end{equation}
\begin{equation*}
(b)\sum_{\alpha \in
\fd_q}\lambda(\frac{\beta}{\alpha^2+\alpha+b})=-K(\lambda;\beta)-1,
\qquad \qquad \qquad \qquad \qquad \qquad \qquad
\end{equation*}
if $x^2+x+b (b\in \fd_q)$ is irreducible over $\fd_q$, or
equivalently if $b \in \fd_q\setminus\Theta(\fd_q)$ (cf.
\;(\ref{a39})).\\
\end{theorem}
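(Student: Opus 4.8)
The plan is to reparametrize both sums through the Artin-Schreier map $\Theta(\alpha)=\alpha^2+\alpha$ and then collapse everything onto the defining sum of $K(\lambda;\beta)=\sum_{\gamma\neq 0}\lambda(\gamma+\beta/\gamma)$ by character orthogonality. First I would record the structural facts about $\Theta$ already available from (\ref{a39}): $\Theta$ is a two-to-one homomorphism $\fd_q^+\to\fd_q^+$ whose image is $\Theta(\fd_q)=\{t\in\fd_q:tr(t)=0\}$ (of index two), with each fiber of the form $\{\alpha,\alpha+1\}$. In particular the fiber over $0$ is exactly $\{0,1\}$, which explains why part (a) excludes these two points, and the image is cut out by the indicator $\tfrac12(1+\lambda(t))$, equal to $1$ when $tr(t)=0$ and to $0$ when $tr(t)=1$.

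For part (a), put $t=\alpha^2+\alpha$. As $\alpha$ runs over $\fd_q\setminus\{0,1\}$ the value $t$ runs over $\Theta(\fd_q)\setminus\{0\}$, each value attained exactly twice, so the left side equals $2\sum_{t\in\Theta(\fd_q),\,t\neq 0}\lambda(\beta/t)$. Inserting $\tfrac12(1+\lambda(t))$ over all $t\neq 0$ splits this into $\sum_{t\neq 0}\lambda(\beta/t)$ and $\sum_{t\neq 0}\lambda(t+\beta/t)$. The first is $-1$, via the substitution $t\mapsto\beta/t$ together with $\sum_{s\in\fd_q}\lambda(s)=0$; the second is exactly $K(\lambda;\beta)$ by definition. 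The two factors of two cancel and give $K(\lambda;\beta)-1$.

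For part (b), the hypothesis that $x^2+x+b$ is irreducible is equivalent to $b\notin\Theta(\fd_q)$, i.e. $tr(b)=1$, so $\alpha\mapsto\alpha^2+\alpha+b$ is two-to-one from $\fd_q$ onto the nonzero coset $\Theta(\fd_q)+b=\{s:tr(s)=1\}$; crucially $0$ lies outside this coset, so the denominator never vanishes and every term is well defined. The left side is then $2\sum_{tr(s)=1}\lambda(\beta/s)$, and inserting the complementary indicator $\tfrac12(1-\lambda(s))$ runs the two identical subcomputations as before, now producing $-1-K(\lambda;\beta)$, hence $-K(\lambda;\beta)-1$.

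The computations are routine once the reparametrization is in place; the only points needing care are the bookkeeping of the two-to-one map (the factor of two and the correct empty/excluded fiber over $0$) and, in part (b), the verification that irreducibility forces $\alpha^2+\alpha+b\neq 0$ throughout. I expect the translation ``$x^2+x+b$ irreducible $\iff tr(b)=1 \iff 0\notin\Theta(\fd_q)+b$'' to be the one step worth stating explicitly, since it both makes part (b) well-posed and supplies the sign change relative to part (a).
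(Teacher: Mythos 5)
Your argument is correct and complete: the two-to-one Artin--Schreier reparametrization, the identification $\Theta(\fd_q)=\ker(tr)$, the indicator $\tfrac12(1\pm\lambda(t))$, and the evaluation $\sum_{t\neq 0}\lambda(\beta/t)=-1$ all check out, and the observation that irreducibility of $x^2+x+b$ forces $0\notin\Theta(\fd_q)+b$ is exactly the point that makes part (b) well-posed. Note that the paper itself gives no proof of this statement --- it is quoted from reference [D3] --- so there is nothing to compare against here; your proof is the standard one and fills that gap correctly.
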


\begin{theorem}\label{M}
(a) The map $\fd_q \rightarrow C(DC^-(n,q))^{\bot}(a \mapsto
c^-(a))$ is an $\fd_{2}$-linear isomorphism for $n \geq 3 $ odd and
all $q $, or $n=1 $ and $q \geq 8$.

(b) The map $\fd_q \rightarrow C(DC^+(n,q))^{\bot}(a \mapsto
c^+(a))$ is an $\fd_{2}$-linear isomorphism for $n \geq 4 $ even and
all $q$, or $n=2$ and $q \geq 4$.
\end{theorem}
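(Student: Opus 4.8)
The plan is to separate the three ingredients of the word "$\mathbb{F}_2$-linear isomorphism'' and observe that two of them are free. By the descriptions (\ref{a37}) and (\ref{a38}), the maps $a\mapsto c^-(a)$ and $a\mapsto c^+(a)$ are by definition surjective onto their respective dual codes, and they are $\mathbb{F}_2$-linear because $tr:\mathbb{F}_q\to\mathbb{F}_2$ is additive while $a\mapsto a\,Trg_i$ is $\mathbb{F}_2$-linear in $a$ for each fixed $g_i$. Since $\dim_{\mathbb{F}_2}\mathbb{F}_q=r$ is finite and the map is onto, it is an isomorphism precisely when it is injective; and by linearity injectivity reduces to showing that $c^-(a)=0$ forces $a=0$ (and likewise for $c^+$). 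So the whole content is to exhibit, for every $a\neq 0$, a coordinate equal to $1$.

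The key step is to convert "$c^-(a)\neq 0$'' into a positive weight estimate via the exponential sums already computed. Writing $\lambda(x)=(-1)^{tr(x)}$, the Hamming weight of $c^-(a)$ is
\[
w(c^-(a))=\tfrac12\Big(N^-(n,q)-\sum_{w\in DC^-(n,q)}\lambda(a\,Trw)\Big),
\]
and by Corollary \ref{G}, equation (\ref{a25}), the inner sum equals $A^-(n,q)K(\lambda;a)$. Using $N^-(n,q)=A^-(n,q)B^-(n,q)$ this gives
\[
w(c^-(a))=\tfrac12A^-(n,q)\big(B^-(n,q)-K(\lambda;a)\big),
\]
so $c^-(a)\neq 0$ for all $a\neq 0$ as soon as $B^-(n,q)>K(\lambda;a)$, which by the Weil bound (\ref{a19}) follows from $B^-(n,q)>2\sqrt q$. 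Repeating the computation in the symplectic plus case with (\ref{a26}) yields $w(c^+(a))=\tfrac12A^+(n,q)\big(B^+(n,q)-K(\lambda;a)^2-q^2+q\big)$, which is positive for every $a\neq 0$ provided $B^+(n,q)>q^2+3q$, again invoking $K(\lambda;a)^2\le 4q$.

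The remaining task is to verify these two elementary inequalities in the stated ranges using the explicit products (\ref{a2}) and (\ref{a4}). For $n=1$ one has $B^-(1,q)=q-1$, and $q-1>2\sqrt q$ holds exactly when $q\ge 8$ (for $q$ a power of two); for odd $n\ge 3$ the factor $(q^n-1)\prod_{j=1}^{(n-1)/2}(q^{2j}-1)$ already dwarfs $2\sqrt q$ for every $q$. Similarly $B^+(2,q)=(q^2-1)(q-1)$ exceeds $q^2+3q$ precisely when $q\ge 4$, while for even $n\ge 4$ the comparison is immediate. The conceptual work is entirely carried by Corollary \ref{G} and the Weil bound, so the only genuine obstacle is pinning down the two boundary cases $n=1$ and $n=2$, where the exact thresholds $q\ge 8$ and $q\ge 4$ are forced by the inequalities above; everything else is definitional or a coarse growth-rate comparison. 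As an alternative for the large-$n$ cases, one could instead read off from Corollary \ref{J} that $Tr$ takes every value on $DC^{\pm}(n,q)$ and hence that some coordinate $tr(a\,Trw)$ equals $1$, but the weight computation above disposes of all cases uniformly and is the cleaner route.
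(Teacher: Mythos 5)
Your proof is correct, but it takes a genuinely different route from the paper's. The paper argues case by case on the kernel: for $n\geq 3$ odd (and for the even case of (b)) it invokes Corollary \ref{J}, which gives $N_{DC^{\mp}(n,q)}(\beta)>0$ for every $\beta$, so $Tr$ maps the double coset onto all of $\fd_q$; a kernel element $a$ then satisfies $tr(a\alpha)=0$ for every $\alpha\in\fd_q$, forcing $a=0$ by nondegeneracy of the trace form. For the boundary case $n=1$, $q\geq 8$ (where $Tr$ is \emph{not} surjective, since $N_{DC^-(1,q)}(\beta)=0$ when $tr(\beta^{-1})=1$), the paper instead combines Hilbert's Theorem 90 with the identity (\ref{a40}) to show a nonzero kernel element would give $q-2=K(\lambda;a)-1\leq 2\sqrt{q}-1$, contradicting $q\geq 8$. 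You instead compute the Hamming weight $w(c^{\mp}(a))$ directly from the exponential sums (\ref{a25}) and (\ref{a26}) --- this is exactly the content of the paper's later weight lemma, equations (\ref{a42})--(\ref{a44}), whose proof does not depend on Theorem \ref{M}, so there is no circularity --- and then apply the Weil bound (\ref{a19}) uniformly. This handles all cases at once and makes the thresholds transparent: $q\geq 8$ and $q\geq 4$ are precisely where $B^-(1,q)=q-1>2\sqrt{q}$ and $B^+(2,q)=(q^2-1)(q-1)>q^2+3q$ first hold for $q$ a power of two. In the $n=1$ case the two arguments ultimately reduce to the same inequality $q-1>2\sqrt{q}$, but yours reaches it without the detour through Hilbert 90; what the paper's surjectivity argument buys in the generic cases is that it avoids any appeal to the Weil bound there. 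Both are complete proofs.
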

\begin{proof}
(a) The map is clearly $\fd_{2}$-linear and surjective. Let $a$ be
in the kernel of map. Then $tr(aTrg)=0$, for all $g \in DC^-(n,q)$.
If $n \geq 3$ is odd, then, by Corollary 10 (a), $Tr : DC^-(n,q)
\rightarrow \fd_q$ is surjective and hence $tr(a \alpha)=0$, for all
$\alpha \in \fd_q$. This implies that $a=0$, since otherwise $tr
:\fd_q \rightarrow \fd_2$ would be the zero map. Now, assume that $
n=1$ and $q \geq 8$. Then, by (31), $tr (a \beta)=0$, for all $\beta
\in \fd_{q}^{*}$, with $tr(\beta^{-1})=0$. Hilbert's theorem 90 says
that $tr( \gamma)=0 \Leftrightarrow \gamma=\alpha^2 + \alpha$, for
some $\alpha \in \fd_q$. This implies that $\sum_{\alpha \in \fd_q -
\{0,1\}} \lambda (\frac{a}{\alpha^2+\alpha})=q-2$. If $a \neq 0$,
then, using (40) and the Weil bound (19), we would have
\begin{equation*}
q-2=\sum_{\alpha \in \fd_q - \{0,1\}}  \lambda(\frac{a}{\alpha^2+
\alpha})=K(\lambda;a)-1 \leq 2 \sqrt{q}-1.
\end{equation*}
But this is impossible, since $x > 2 \sqrt{x} +1$, for $x \geq 8$.

(b) This can be proved in exactly the same manner as in the $n \geq
3$ odd case of (a) (cf. Corollary 10 (b)).\\
\end{proof}

Remark: One can show that the kernel of the map $\fd_q \rightarrow
C(DC^-(1,q))^{\bot}(a \mapsto c^-(a))$, for $q=2,4$ and of the map $
\fd_q \rightarrow C(DC^+(2,2))^{\bot}(a \mapsto c^+(a))$ are all
equal to $\fd_2$.

\section{Recursive formulas for power moments of Kloosterman sums}
Here we will be able to find, via Pless power moment identity,
infinite families of recursive formulas generating power moments of
Kloosterman and 2-dimensional Kloosterman sums over all
$\fd_{q}$(with three exceptions) in terms of the frequencies of
weights in $ C(DC^-(n,q))$ and $C(DC^+(n,q))$, respectively.

\begin{theorem}\label{N}(Pless power moment identity, \cite{FN}):
Let $ B$ be an $q$-ary $[n,k]$ code, and let $B_{i}$(resp.$B_{i}
^{\bot})$ denote the number of codewords of weight $i$ in $B$(resp.
in $B^{\bot})$. Then, for $h=0,1,2, \cdots$,

\begin{align}\label{a41}
  \begin{split}
&\sum_{j=0}^{n}j^{h}B_{j}\\
&=\sum_{j=0}^{min \{ n,h \}}(-1)^{j}B_{j} ^{\bot} \sum_{t=j}^{h} t!
S(h,t)q^{k-t}(q-1)^{t-j}\binom{n-j}{n-t},
  \end{split}
\end{align}
\end{theorem}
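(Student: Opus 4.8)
The plan is to derive the identity from two classical ingredients: a Stirling expansion of the monomial $j^h$ into binomial coefficients, and a single ``binomial moment'' relation between the weight distributions of $B$ and $B^\bot$ obtained from the MacWilliams identity. First I would record the polynomial identity satisfied by the numbers in (\ref{a7}), namely
\[
x^h=\sum_{t=0}^{h}S(h,t)\,x(x-1)\cdots(x-t+1)=\sum_{t=0}^{h}t!\,S(h,t)\binom{x}{t},
\]
which is the standard inverse of the defining formula (\ref{a7}) (the surjection count). Specializing $x=j$ and summing against $B_j$ gives
\[
\sum_{j=0}^{n}j^{h}B_{j}=\sum_{t=0}^{h}t!\,S(h,t)\sum_{j=0}^{n}\binom{j}{t}B_{j},
\]
so everything reduces to evaluating the binomial moments $\sum_{j}\binom{j}{t}B_{j}$ in terms of the dual weights.

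The core of the argument is the claim, for each $0\le t\le n$,
\[
\sum_{j=0}^{n}\binom{j}{t}B_{j}=q^{k-t}\sum_{j=0}^{t}(-1)^{j}(q-1)^{t-j}\binom{n-j}{n-t}B_{j}^{\bot}.
\]
To prove this I would start from the MacWilliams identity in the form $W_{B}(x,y)=q^{-(n-k)}W_{B^{\bot}}(x+(q-1)y,\,x-y)$, where $W_{B}(x,y)=\sum_{j}B_{j}x^{n-j}y^{j}$ and similarly for $B^{\bot}$, using $|B^{\bot}|=q^{n-k}$. Setting $x=1$ and viewing both sides as polynomials in $y$, I would apply $\tfrac{1}{t!}(d/dy)^{t}$ and evaluate at $y=1$. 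On the left this extracts exactly $\sum_{j}\binom{j}{t}B_{j}$, since $\tfrac{1}{t!}(d/dy)^{t}y^{j}\big|_{y=1}=\binom{j}{t}$. On the right I would expand $(1+(q-1)y)^{n-j}(1-y)^{j}$ by the Leibniz rule: because $(1-y)^{j}$ vanishes to order $j$ at $y=1$, only the term that applies exactly $j$ derivatives to that factor survives, producing $j!(-1)^{j}$, while the remaining $t-j$ derivatives hit $(1+(q-1)y)^{n-j}$ and, evaluated at $y=1$ (where $1+(q-1)y=q$), yield a factor $q^{n-t}(q-1)^{t-j}$ times a falling factorial that collapses to $\binom{n-j}{n-t}$. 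Combining the $q^{n-t}$ with the prefactor $q^{-(n-k)}$ gives $q^{k-t}$, establishing the claim (the sum truncates at $j=t$ because $\binom{n-j}{n-t}=0$ for $j>t$).

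Finally I would substitute this binomial moment relation back into the Stirling expansion and interchange the order of summation, letting $j$ range over $0\le j\le\min\{n,h\}$ and $t$ over $j\le t\le h$, to obtain precisely the right-hand side of (\ref{a41}). The main obstacle I anticipate is the Leibniz-rule computation together with its bookkeeping: correctly isolating the unique surviving term coming from the order-$j$ zero of $(1-y)^{j}$, converting the resulting falling factorial $(n-j)(n-j-1)\cdots(n-t+1)$ into $\binom{n-j}{n-t}$, and tracking the MacWilliams normalization $q^{-(n-k)}$ so that the powers of $q$ assemble into $q^{k-t}$. Once that identity is pinned down, the Stirling step and the summation interchange are routine.
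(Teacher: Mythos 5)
Your proposal is correct. The paper does not prove Theorem \ref{N} at all --- it is quoted verbatim from MacWilliams and Sloane \cite{FN} --- so there is no in-paper argument to compare against; what you have written is the standard textbook derivation: expand $j^h=\sum_t t!\,S(h,t)\binom{j}{t}$ via the inverse of (\ref{a7}), reduce to the binomial-moment identity $\sum_j\binom{j}{t}B_j=q^{k-t}\sum_{j=0}^t(-1)^j(q-1)^{t-j}\binom{n-j}{n-t}B_j^{\bot}$, and prove that by differentiating the MacWilliams identity $t$ times at $y=1$, where the order-$j$ zero of $(1-y)^j$ kills all but one Leibniz term. All the bookkeeping you flag (the factor $\tfrac{1}{t!}\binom{t}{j}j!=\tfrac{1}{(t-j)!}$ combining with the falling factorial to give $\binom{n-j}{n-t}$, and $q^{n-t}\cdot q^{-(n-k)}=q^{k-t}$) checks out, and the truncation of the outer sum at $\min\{n,h\}$ is exactly as you describe.
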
 where $S(h,t)$ is the Stirling number of the second
kind defined in (\ref{a7}).\\

\begin{lemma}
Let
\begin{equation*}
c^-(a)=(tr(a Trg_{1}), \cdots, tr(a Trg_{N^-(n,q)})) \in
C(DC^-(n,q))^{\bot}
\end{equation*}$(n=1,3,5,\cdots)$, and let
\begin{equation*}
c^+(a)=(tr(a Tr g_{1}), \cdots,tr(a Trg_{N^+(n,q)})) \in
C(DC^+(n,q))^{\bot}
\end{equation*}$(n=2,4,6,\cdots)$,
for $ a \in \fd_{q}^{*}$. Then the Hamming weights $w(c^-(a))$ and
$w(c^+(a))$ are expressed as follows:

\begin{equation}\label{a42}
(a)w(c^-(a))= \frac{1}{2}A^-(n,q)(B^-(n,q)-K(\lambda ;
a)).\quad\quad
\end{equation}
\begin{equation}\label{a43}
(b)w(c^{+}(a))=\frac{1}{2}A^+(n,q)(B^+(n,q)-q^2+q-K(\lambda ;a)^2)
\end{equation}
\begin{equation}\label{a44}
=\frac{1}{2}A^+(n,q)(B^+(n,q)-q^2-K_2(\lambda ; a))
\end{equation}
(cf. (1)- (4)).\\

\begin{proof}
$w (c^{\mp}(a))=\frac{1}{2} \sum_{j=1}^{N^{\mp}(n,q)}(1-(-1)^{tr(aTr
g_j)})=\frac{1}{2}(N^{\mp}(n,q)-\sum_{ w \in DC^{\mp}(n,q)}
\lambda(a Tr w))$. Our results now follow from (\ref{a33}),
(\ref{a34}), (\ref{a25}), and
(\ref{a26}).\\
\end{proof}
\end{lemma}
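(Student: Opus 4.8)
The plan is to compute the Hamming weight of each codeword directly from its definition, converting the count of nonzero coordinates into an exponential sum that the earlier corollary has already evaluated in closed form. Each coordinate of $c^{\mp}(a)$ is $tr(aTrg_j) \in \fd_2$, so a coordinate contributes $1$ to the weight exactly when $tr(aTrg_j)=1$, i.e.\ when $\lambda(aTrg_j)=(-1)^{tr(aTrg_j)}=-1$. The standard trick is to write the indicator of a nonzero bit as $\tfrac{1}{2}(1-(-1)^{tr(aTrg_j)})=\tfrac{1}{2}(1-\lambda(aTrg_j))$, which is exactly the identity displayed at the start of the proof. Summing over all coordinates $j=1,\dots,N^{\mp}(n,q)$ then gives
\begin{equation*}
w(c^{\mp}(a))=\frac{1}{2}\Bigl(N^{\mp}(n,q)-\sum_{w\in DC^{\mp}(n,q)}\lambda(aTrw)\Bigr).
\end{equation*}

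First I would record that $N^{\mp}(n,q)=A^{\mp}(n,q)B^{\mp}(n,q)$ by the definitions (\ref{a33}) and (\ref{a34}). The remaining ingredient is the exponential sum $\sum_{w\in DC^{\mp}(n,q)}\lambda(aTrw)$, which Corollary \ref{G} evaluates explicitly: for odd $n$ it equals $A^{-}(n,q)K(\lambda;a)$ by (\ref{a25}), and for even $n$ it equals $A^{+}(n,q)(K(\lambda;a)^2+q^2-q)=A^{+}(n,q)(K_2(\lambda;a)+q^2)$ by (\ref{a26}). Substituting these into the boxed formula and factoring out $\tfrac{1}{2}A^{\mp}(n,q)$ yields the stated expressions directly. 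For part (a), $w(c^-(a))=\tfrac{1}{2}(A^-B^- - A^-K(\lambda;a))=\tfrac{1}{2}A^-(B^- - K(\lambda;a))$, giving (\ref{a42}). For part (b), using the first form of (\ref{a26}) gives $w(c^+(a))=\tfrac{1}{2}A^+(B^+ - (K(\lambda;a)^2+q^2-q))=\tfrac{1}{2}A^+(B^+-q^2+q-K(\lambda;a)^2)$, which is (\ref{a43}); using the second form of (\ref{a26}), or equivalently Carlitz's relation $K_2(\lambda;a)=K(\lambda;a)^2-q$ from (\ref{a24}), rewrites this as $\tfrac{1}{2}A^+(B^+-q^2-K_2(\lambda;a))$, which is (\ref{a44}).

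This argument is essentially bookkeeping: the substantive work has already been done in assembling Corollary \ref{G}, so there is no genuine obstacle here beyond keeping the signs and the two algebraically equivalent forms of the even-$n$ sum straight. The only point requiring a moment's care is the passage between (\ref{a43}) and (\ref{a44}), where one must invoke $K_2(\lambda;a)=K(\lambda;a)^2-q$ so that $-q^2+q-K(\lambda;a)^2=-q^2-(K(\lambda;a)^2-q)=-q^2-K_2(\lambda;a)$; both forms are recorded in Corollary \ref{G}, so this step is immediate. I would present the single displayed identity for $w(c^{\mp}(a))$ once, treating the upper and lower signs uniformly, and then split into the two cases only at the point of substituting the appropriate line of Corollary \ref{G}.
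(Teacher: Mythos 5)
Your proposal is correct and follows exactly the paper's argument: expressing the weight via the indicator $\tfrac{1}{2}(1-(-1)^{tr(aTrg_j)})$, summing to get $\tfrac{1}{2}(N^{\mp}(n,q)-\sum_{w}\lambda(aTrw))$, and then substituting (\ref{a33}), (\ref{a34}) and Corollary \ref{G} (equations (\ref{a25}), (\ref{a26})). The only difference is that you write out the routine algebra and the Carlitz identity step explicitly, which the paper leaves implicit.
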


Let $u=(u_1, \cdots, u_{N_{N^{\mp}(n,q)}}) \in
\fd_2^{N^{\mp}(n,q)}$, with $\nu_{\beta}$ 1's in the coordinate
places where $Tr(g_j)= \beta$, for each $\beta \in \fd_q$. Then from
the definition of the codes $C(DC^{\mp}(n,q))$ (cf. (35), (36)) that
$u$ is a codeword with weight $j$ if and only if $\sum_{\beta \in
\fd_{q}}^{} \nu _{\beta}=j$ and $\sum_{\beta \in \fd_{q}}^{}
\nu_{\beta} \beta=0$ (an identity in $\fd_{q}$). As there are
$\prod_{\beta \in \fd_{q}}
\binom{N_{DC^{\mp}(n,q)}(\beta)}{\nu_{\beta}}$ many such codewords
with weight $j$, we obtain the following result.\\

\begin{proposition}
Let $\{C_{j}^{-}(n,q)\}_{j=0}^{N^-(n,q)}$ be the weight distribution
of $C(DC^{-}(n,q))$ $(n=1,3,5, \cdots)$, and let
$\{C_{j}^{+}(n,q)\}_{j=0}^{N^+(n,q)}$ be that of $C(DC^+(n,q))$
$(n=2,4,6,\cdots)$. Then
\begin{equation}\label{a45}
C_{j}^{\mp}(n,q)=\sum \prod_{\beta \in
\fd_q}\binom{N_{DC^{\mp}(n,q)}(\beta)}{\nu_{\beta}},
\end{equation}
where the sum is over all the sets of integers $\{\nu_{\beta }\}_{
\beta \in \fd_q}(0 \leq \nu_{\beta} \leq N_{DC^{\mp}(n,q)}(\beta)
)$, satisfying
\begin{equation}\label{a46}
\sum_{\beta \in \fd_{q}}^{} \nu_{\beta}=j,\;\; \textmd{and}
\sum_{\beta \in \fd_{q}}^{} \nu_{\beta} \beta=0.
\end{equation}
\end{proposition}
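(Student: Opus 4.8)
The plan is to read off the weight distribution directly from the definition of the codes, by sorting the coordinate positions according to the trace value attached to them and reducing the single codeword condition to a pair of numerical constraints on the resulting counts.

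First I would recall that, by (\ref{a35}) and (\ref{a36}), a binary vector $u=(u_1,\dots,u_{N^{\mp}(n,q)})\in\fd_2^{N^{\mp}(n,q)}$ lies in $C(DC^{\mp}(n,q))$ exactly when $u\cdot v^{\mp}(n,q)=\sum_{\ell}u_\ell\, Tr\,g_\ell=0$ in $\fd_q$, where $g_1,\dots,g_{N^{\mp}(n,q)}$ is the chosen ordering of the elements of $DC^{\mp}(n,q)$ and each binary entry $u_\ell$ is regarded as an element of $\fd_q\supseteq\fd_2$. Since the word is binary, its Hamming weight is just the number of indices $\ell$ with $u_\ell=1$. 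The key step is to partition the index set $\{1,\dots,N^{\mp}(n,q)\}$ into the fibres of the map $\ell\mapsto Tr\,g_\ell$: for each $\beta\in\fd_q$ the number of positions $\ell$ with $Tr\,g_\ell=\beta$ is, by definition, $N_{DC^{\mp}(n,q)}(\beta)$.

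Next I would attach to every $u$ its type $\{\nu_\beta\}_{\beta\in\fd_q}$, where $\nu_\beta$ counts the indices $\ell$ with $u_\ell=1$ and $Tr\,g_\ell=\beta$, so that $0\le\nu_\beta\le N_{DC^{\mp}(n,q)}(\beta)$. Two elementary bookkeeping identities then encode membership in the code as constraints on the type. The weight of $u$ equals $\sum_\beta\nu_\beta$, so weight $j$ is the condition $\sum_\beta\nu_\beta=j$; and grouping the sum $\sum_\ell u_\ell\, Tr\,g_\ell$ over the fibres rewrites it as $\sum_\beta\nu_\beta\beta$, where $\nu_\beta\beta$ stands for the $\nu_\beta$-fold sum of $\beta$ in $\fd_q$. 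Hence $u\in C(DC^{\mp}(n,q))$ if and only if $\sum_\beta\nu_\beta\beta=0$, and a word has weight $j$ and lies in the code precisely when its type obeys the two relations in (\ref{a46}).

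Finally I would count. For each admissible type $\{\nu_\beta\}$ the words realizing it are produced by choosing independently, for every $\beta$, which $\nu_\beta$ of the $N_{DC^{\mp}(n,q)}(\beta)$ positions in the $\beta$-fibre carry a $1$; there are $\prod_{\beta\in\fd_q}\binom{N_{DC^{\mp}(n,q)}(\beta)}{\nu_\beta}$ of them. Distinct types give disjoint collections of words and every weight-$j$ codeword has exactly one type, so summing over all admissible types yields (\ref{a45}). This is a pure counting argument and I expect no real obstacle; the one point that needs care is the passage from the single linear relation $\sum_\ell u_\ell\, Tr\,g_\ell=0$ over $\fd_q$ to the numerical relation $\sum_\beta\nu_\beta\beta=0$, that is, verifying that the binary entries embed correctly into $\fd_q$ and that repeated addition in characteristic two is tracked accurately.
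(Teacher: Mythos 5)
Your proposal is correct and follows essentially the same route as the paper: group the coordinates of a codeword by the value of $Tr\,g_\ell$, translate the weight and the defining relation $u\cdot v^{\mp}(n,q)=0$ into the two conditions of (\ref{a46}) on the counts $\nu_\beta$, and count the words of each admissible type by the product of binomial coefficients. The paper compresses this into a single sentence before the proposition, but the underlying argument is identical.
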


\begin{corollary}\label{Q}
Let $\{C_{j}^{-}(n,q)\}_{j=0}^{N^-(n,q)}$ $(n=1,3,5, \cdots)$, $\{
C_{j}^{+}(n,q)\}_{j=0}^{N^{+}(n,q)}$ $(n=2,4,6, \cdots)$ be as
above. Then we have
\begin{align*}
  \begin{split}
&C_{j}^{\mp}(n,q)=C_{N^{\mp}(n,q)-j}^{\mp}(n,q),\\
 &\textmd{for all}\;\; j,\;\; \textmd{with}\;\; 0 \leq j \leq N^{\mp}(n,q).
  \end{split}
\end{align*}
\end{corollary}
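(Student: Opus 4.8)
The plan is to exhibit a weight-complementing involution on the index set of the sum defining $C_j^{\mp}(n,q)$ in (\ref{a45}). Recall from (\ref{a45}), (\ref{a46}) that $C_j^{\mp}(n,q)$ is the sum of $\prod_{\beta \in \fd_q}\binom{N_{DC^{\mp}(n,q)}(\beta)}{\nu_\beta}$ over all families $\{\nu_\beta\}_{\beta \in \fd_q}$ of integers with $0 \le \nu_\beta \le N_{DC^{\mp}(n,q)}(\beta)$ subject to $\sum_\beta \nu_\beta = j$ and $\sum_\beta \nu_\beta \beta = 0$. I would send each such family to its complement $\{N_{DC^{\mp}(n,q)}(\beta) - \nu_\beta\}_{\beta}$, which corresponds on the code side to adding the all-ones word. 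By the symmetry $\binom{N}{\nu} = \binom{N}{N-\nu}$ this map preserves each summand, and it is manifestly an involution of the full set of admissible families. The total-weight constraint transforms correctly, since $\sum_\beta \bigl(N_{DC^{\mp}(n,q)}(\beta) - \nu_\beta\bigr) = N^{\mp}(n,q) - j$.

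The one point that must be checked is that the second constraint is also respected, i.e. that the complement again satisfies $\sum_\beta\bigl(N_{DC^{\mp}(n,q)}(\beta) - \nu_\beta\bigr)\beta = 0$ in $\fd_q$. As $\sum_\beta \nu_\beta \beta = 0$, this reduces to the single identity
\begin{equation*}
\sum_{\beta \in \fd_q} N_{DC^{\mp}(n,q)}(\beta)\,\beta = 0 \quad \text{in } \fd_q,
\end{equation*}
equivalently $\sum_{w \in DC^{\mp}(n,q)} Tr\,w = 0$, i.e. that the all-ones vector lies in $C(DC^{\mp}(n,q))$. This is the key step, and I expect it to be the main (though modest) obstacle.

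To establish it I would read off the parity of $N_{DC^{\mp}(n,q)}(\beta)$ from the closed forms (\ref{a29}) and (\ref{a30}). Writing $N_{DC^{\mp}(n,q)}(\beta) = q^{-1}A^{\mp}(n,q)\bigl(B^{\mp}(n,q) + \epsilon^{\mp}(\beta)\bigr)$, where $\epsilon^{\mp}(\beta)$ is the case-dependent term in (\ref{a29})/(\ref{a30}), I would note that every value of $\epsilon^{\mp}(\beta)$ is odd: each equals $\pm 1$ plus a multiple of the even number $q = 2^r$, using also that $qK(\lambda;\beta^{-1})$ is even. From the definitions (\ref{a1})--(\ref{a4}) the factor $q^{-1}A^{\mp}(n,q)$ is divisible by $q$, hence even, for all odd $n \ge 3$ and all even $n \ge 2$; and in the remaining case $n=1$ one has $q^{-1}A^{-}(1,q) = 1$ while $B^{-}(1,q) = q-1$ is odd, so $B^{-}(1,q) + \epsilon^{-}(\beta)$ is even. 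In every case $N_{DC^{\mp}(n,q)}(\beta)$ is therefore even, so each term $N_{DC^{\mp}(n,q)}(\beta)\,\beta$ vanishes in the characteristic-two field $\fd_q$ and the displayed identity holds. With the involution now a bijection between the families contributing to $C_j^{\mp}(n,q)$ and those contributing to $C_{N^{\mp}(n,q)-j}^{\mp}(n,q)$, matching each summand, the equality $C_j^{\mp}(n,q) = C_{N^{\mp}(n,q)-j}^{\mp}(n,q)$ follows at once.
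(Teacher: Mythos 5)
Your proof is correct and follows essentially the same route as the paper: the complementation $\nu_\beta \mapsto N_{DC^{\mp}(n,q)}(\beta)-\nu_\beta$, symmetry of binomial coefficients, and the key identity $\sum_{\beta\in\fd_q} N_{DC^{\mp}(n,q)}(\beta)\,\beta=0$, which the paper merely asserts ``as one can see by using the explicit expressions'' in (\ref{a29})--(\ref{a30}). Your parity argument (each $N_{DC^{\mp}(n,q)}(\beta)$ is even, so each term dies in characteristic two) is a clean and correct way to supply that omitted verification.
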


\begin{proof}
Under the replacements $\nu_{\beta}\rightarrow N_{DC^{\mp}(n,q)}(
\beta)- \nu_{\beta}$, for each  $\beta \in \fd_q$, the first
equation in (\ref{a46}) is changed to $N^{\mp}(n,q)-j$, while the
second one in there and the summands in (\ref{a45}) are left
unchanged. Here the second sum in (\ref{a46}) is left unchanged,
since $\sum_{\beta \in \fd_q} N_{DC^{\mp}(n,q)}(\beta) \beta=0$, as
one can see by using the explicit expressions of
$N_{DC^{\mp}(n,q)}(\beta)$ in (\ref{a29}) and (\ref{a30}).\\
\end{proof}

\begin{theorem}[\cite{GJ}]\label{R}
Let $q=2^r$, with $ r \geq 2$. Then the range $R$ of $K(\lambda ;a)
$, as $a$ varies over $\fd_q^{*}$, is given by:
\begin{equation*}
R=\{\tau \in \mathbb{Z} \; | \; |\tau |<2 \sqrt{q}, \; \tau \equiv
-1 (mod \; 4) \}.
\end{equation*}
In addition, each value $\tau \in R $ is attained exactly $H(\tau^2
-q)$ times, where $H(d)$ is the Kronecker class number of $d$.
\end{theorem}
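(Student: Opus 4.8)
The plan is to separate Theorem~\ref{R} into the inclusion ``every value of $K(\lambda;a)$ lies in $R$'', which is elementary, and the sharper assertion that each $\tau\in R$ is attained exactly $H(\tau^2-q)$ times, which I would extract from the theory of elliptic curves over $\fd_q$. The easy inclusion I would argue directly on the sum. Since $\lambda$ takes only the values $\pm1$, the Kloosterman sum $K(\lambda;a)=\sum_{x\in\fd_q^*}\lambda(x+a/x)$ is a rational integer, and the Weil bound (\ref{a19}) gives $|K(\lambda;a)|\le2\sqrt q$. For the congruence I would use the involution $x\mapsto a/x$ of $\fd_q^*$, which fixes the summand $x+a/x$. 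In characteristic two it has the unique fixed point $x_0=\sqrt a$, at which $x_0+a/x_0=2\sqrt a=0$ and the summand contributes $\lambda(0)=1$; the remaining $q-2$ elements fall into $(q-2)/2=2^{r-1}-1$ orbits of size two, each contributing an even amount $\pm2$. Thus $K(\lambda;a)=1+2S$ where $S$ is a sum of $2^{r-1}-1$ signs, so $S\equiv 2^{r-1}-1\pmod2$; as $r\ge2$ makes $2^{r-1}-1$ odd, $S$ is odd and $K(\lambda;a)\equiv-1\pmod4$. The inequality is then automatically strict: for even $r$ the number $2\sqrt q$ is an integer divisible by $4$, hence different from any $\tau\equiv-1\pmod4$, while for odd $r$ it is irrational. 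This proves $K(\lambda;a)\in R$.

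For the reverse inclusion and the multiplicity I would pass to elliptic curves. Using Hilbert's Theorem~90 in the Artin--Schreier form recorded in (\ref{a39}), the number of $x$ with a prescribed value of $tr(x+a/x)$ is controlled by the affine curve $y^2+y=x+a/x$; counting its affine points over $x\neq0$ gives $q-1+K(\lambda;a)$, and after completing to a smooth projective model one obtains, for every $a\in\fd_q^*$, an elliptic curve $E_a/\fd_q$ whose point count is an explicit affine-linear function of $K(\lambda;a)$. The trace of Frobenius of $E_a$ therefore differs from $K(\lambda;a)$ by an explicit constant coming from the points at infinity, and since $K(\lambda;a)$ is odd this trace is prime to $2$, so each $E_a$ is ordinary. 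The Hasse bound recovers $|K(\lambda;a)|\le2\sqrt q$ from the elliptic-curve side and interprets each $\tau\in R$ as the trace of an ordinary curve.

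To count the $a$ yielding a fixed $\tau$ I would invoke the Deuring correspondence, in the refined form of Waterhouse and Schoof: the weighted number of $\fd_q$-isomorphism classes of ordinary elliptic curves with prescribed trace is a Hurwitz--Kronecker class number of the Frobenius discriminant. Translating this through the explicit relation between the parameter $a$, the trace, and the Weierstrass model of $E_a$ — in particular using the fixed $2$-power torsion forced by the special shape of $E_a$ — rescales the discriminant and turns the count into $H(\tau^2-q)$; since such a class number is a positive integer for every admissible $\tau$, every $\tau\in R$ is attained, which is exactly the surjectivity. I expect this last step to be the main obstacle. The delicate parts are pinning down the sign and additive shift in relations of the shape $\#E_a(\fd_q)=q+1\mp K(\lambda;a)$, passing from abstract isomorphism classes to an honest count of the parameters $a$ (which requires controlling automorphisms and quadratic twists in characteristic two), and justifying the complex-multiplication input in characteristic two, where one argues through the canonical Deuring lift to characteristic zero to produce precisely the class number $H(\tau^2-q)$. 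Once these bookkeeping issues are settled, the two inclusions and the exact multiplicity combine to give the theorem.
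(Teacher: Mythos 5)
The paper does not actually prove this statement: Theorem~\ref{R} is imported verbatim from the cited reference \cite{GJ} of Lachaud and Wolfmann and used as a black box, so there is no internal proof to compare yours against. Your first half --- that every value of $K(\lambda;a)$ lies in $R$ --- is correct and complete as written: the involution $x\mapsto a/x$ of $\fd_q^*$ preserves the summand, has the single fixed point $\sqrt a$ where the summand is $0$ and contributes $+1$, and the remaining $2^{r-1}-1$ (an odd number, since $r\ge 2$) two-element orbits each contribute $\pm 2$, giving $K(\lambda;a)=1+2S$ with $S$ odd, hence $K(\lambda;a)\equiv -1 \pmod 4$; strictness in the Weil bound then follows from the parity (even $r$) or irrationality (odd $r$) of $2\sqrt q$ exactly as you say. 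That containment is a clean, self-contained argument.

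The gap is in the second half, and you have located it yourself: the assertion that each $\tau\in R$ is attained exactly the stated number of times \emph{is} the hard content of the theorem, and your proposal replaces its proof by an appeal to the Deuring--Waterhouse--Schoof classification plus unexecuted ``bookkeeping.'' What is actually missing is the bridge from the parameter count over $a\in\fd_q^*$ to the weighted count of isomorphism classes: you must compute the $j$-invariant of the smooth model of $y^2+y=x+a/x$ as a function of $a$, show that $a\mapsto E_a$ meets each geometric isomorphism class the correct number of times, and verify that the condition $\tau\equiv -1\pmod 4$ (equivalently, Frobenius trace $\equiv 1\pmod 4$) selects exactly one member of each pair of quadratic twists --- this is where automorphism groups and the absence of further twists for $j\neq 0$ in characteristic two enter, and it is the substance of the Lachaud--Wolfmann (and Katz--Livn\'e) argument. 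One further warning: the discriminant produced by Waterhouse/Schoof for an ordinary curve of trace $t$ is $t^2-4q$, not $t^2-q$, so carrying your program through yields the multiplicity $H(\tau^2-4q)$; that is what \cite{GJ} actually proves, and the ``$H(\tau^2-q)$'' in the statement as printed here appears to be a typographical slip. Until that translation is executed, both the surjectivity onto $R$ and the exact multiplicity remain asserted rather than proved.
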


The formulas appearing in the next theorem and stated in (\ref{a6})
and (\ref{a10}) follow  by applying the formula in (\ref{a45}) to
each $C(DC^{\mp}(n,q))$, using the explicit values of $N_{DC^{\mp}
(n,q)}(\beta)$ in (\ref{a29}) and (\ref{a30}), and taking Theorem
\ref{R}
into consideration.\\

\begin{theorem}\label{S}
Let $\{C_{j}^{-}(n,q)\}_{j=0}^{N^-(n,q)}$ be the weight distribution
of  $C(DC^-(n,q))$ $(n=1,3,5,\cdots)$, and let $\{C_{j}^{+}(n,q)
\}_{j=0}^{N^+(n,q)}$ be that of $C(DC^+(n,q))$ $(n=2,4,6,\cdots$).
Then (a) For $j=0,\cdots,N^-(n,q)$,
\begin{align*}\label{}
  \begin{split}
 &C_{j}^{-}(n,q)=\sum \binom{q^{-1}A^-(n,q)(B^-(n,q)+1)}{\nu_0}\\
 &\times \prod_{tr(\beta^{-1})=0}\binom{q^{-1}A^-(n,q)(B^-(n,q)+q+1)}{\nu_{\beta }}\\
  &\times \prod_{tr(\beta^{-1})=1} \binom {q^{-1}A^-(n,q)(B^-(n,q)-q+1)}{\nu_{\beta}},
  \end{split}
\end{align*}
where the sum is over all the sets of nonnegative integers $\{\nu_
{\beta}\}_{\beta \in \fd_q}$ satisfying $\sum_{\beta \in \fd_q}\nu_
\beta=j$ and $\sum_{\beta \in \fd_q}\nu_{\beta}\beta=0$.

(b) For $j=0,\cdots,N^{+}(n,q)$,
\begin{align*}
  \begin{split}
 &C_{j}^{+}(n,q)\\
 &= \sum \binom{q^{-1}A^+(n,q)(B^+(n,q)+q^3-q^2-1)}{\nu_0} \prod_{\substack{|\tau |< 2\sqrt{q}\\ \tau \equiv -1(4)}}\\
 &\times \prod_{K(\lambda  ;  \beta^{-1})= \tau} \binom{q^{-1}A^+(A,q)(B^+(n,q)+q \tau -q^2 -1)} {\nu_ {\beta}},
  \end{split}
\end{align*}
where the sum is over all the sets of nonnegative integers $\{\nu_
{\beta}\}_{\beta \in \fd_q}$ satisfying $\sum_{\beta \in \fd_{q}}
\nu_{\beta}=j$, and $\sum_{\beta \in \fd_q} \nu_{\beta} \beta=0$,
and we assume $ q \geq 4$.
\end{theorem}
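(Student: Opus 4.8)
The plan is to substitute the explicit values of $N_{DC^{\mp}(n,q)}(\beta)$ from Proposition~\ref{I} into the general weight-distribution formula (\ref{a45}), and then regroup the resulting product $\prod_{\beta\in\fd_q}\binom{N_{DC^{\mp}(n,q)}(\beta)}{\nu_\beta}$ according to the value that $N_{DC^{\mp}(n,q)}(\beta)$ takes at each $\beta$. No new estimate is needed; the content is purely the organization of the factors, together with the enumeration of Kloosterman values.

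For part (a) I would start from (\ref{a45}) with the $-$ sign and observe that, by (\ref{a29}), the multiplicity $N_{DC^-(n,q)}(\beta)$ takes exactly three values as $\beta$ runs over $\fd_q$: at $\beta=0$ it equals $q^{-1}A^-(n,q)(B^-(n,q)+1)$; on the fibre $\{\beta:tr(\beta^{-1})=0\}$ it equals $q^{-1}A^-(n,q)(B^-(n,q)+q+1)$; and on $\{\beta:tr(\beta^{-1})=1\}$ it equals $q^{-1}A^-(n,q)(B^-(n,q)-q+1)$. Since these three sets partition $\fd_q$, the product over $\beta$ factors into the single $\beta=0$ binomial times the two products over the trace-$0$ and trace-$1$ fibres, which is precisely the asserted expression for $C_j^-(n,q)$.

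For part (b) I would argue analogously, but now the value $N_{DC^+(n,q)}(\beta)=q^{-1}A^+(n,q)(B^+(n,q)+qK(\lambda;\beta^{-1})-q^2-1)$ from (\ref{a30}) depends on the nonzero $\beta$ only through the Kloosterman value $K(\lambda;\beta^{-1})$. The key step is to invoke Theorem~\ref{R}: as $\beta^{-1}$ (equivalently $\beta$) ranges over $\fd_q^*$, the quantity $K(\lambda;\beta^{-1})$ runs exactly over the integers $\tau$ with $|\tau|<2\sqrt q$ and $\tau\equiv-1\pmod 4$. Pulling out the $\beta=0$ term, whose multiplicity is $q^{-1}A^+(n,q)(B^+(n,q)+q^3-q^2-1)$, and then grouping the remaining $\beta$ into the fibres $\{\beta:K(\lambda;\beta^{-1})=\tau\}$, on which the multiplicity is the constant $q^{-1}A^+(n,q)(B^+(n,q)+q\tau-q^2-1)$, yields the displayed double product.

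Two small points remain to finish cleanly, and neither is a serious obstacle. First, every binomial coefficient must be genuinely defined, i.e. each $N_{DC^{\mp}(n,q)}(\beta)$ must be a nonnegative integer; this is exactly the positivity already recorded in Corollary~\ref{J}, which is why part (b) carries the hypothesis $q\geq4$ (for $q=2$ one fibre multiplicity vanishes, cf. $N_{DC^+(2,2)}(1)=0$). Second, the side conditions $\sum_\beta\nu_\beta=j$ and $\sum_\beta\nu_\beta\beta=0$ in (\ref{a46}) that index the outer sum are untouched by the regrouping, since they do not reference the multiplicities. Thus the main result is a direct specialization of (\ref{a45}), and the only genuinely nontrivial ingredient is the enumeration of Kloosterman values furnished by Theorem~\ref{R}.
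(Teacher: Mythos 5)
Your proposal is correct and coincides with the paper's own (very brief) argument: the paper likewise obtains Theorem \ref{S} by specializing the general weight-distribution formula (\ref{a45}) with the explicit multiplicities $N_{DC^{\mp}(n,q)}(\beta)$ from (\ref{a29})--(\ref{a30}) and invoking Theorem \ref{R} to index the fibres of $\beta\mapsto K(\lambda;\beta^{-1})$ in part (b). Your added remarks on nonnegativity of the multiplicities and the invariance of the side conditions are sound but not part of the paper's stated justification.
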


From now on, we will assume that, for $C(DC^-(n,q))^{\bot}$, either
$n \geq 3$ odd and all $q$ or $n=1$ and $q \geq 8$, and that, for
$C(DC^+(n,q))^{\bot}$, $ n \geq 2$ even and $q \geq 4$. Under these
assumptions, each codeword in $C(DC^{\mp}(n,q))^{\bot}$ can be
written as $c^{\mp}(a)$, for a unique $a \in \fd_q$(cf. Theorem
\ref{M}, (\ref{a37}), (\ref{a38})).

Now, we apply the Pless power moment identity in (\ref{a41}) to $
C(DC^{\mp}(n,q))^{\bot}$ for those values of $n$ and $q$, in order
to get the results in Theorem \ref{A} (cf. (\ref{a5}),
(\ref{a8}),(\ref{a9})) about recursive formulas.

The left hand side of that identity in (\ref{a41}) is equal to
\begin{equation*}
\sum_{a \in \fd_{q}^{*}}w(c^{\mp}(a))^h,
\end{equation*}
with $w(c^{\mp}(a))$ given by (\ref{a42})-(\ref{a44}). We have

\begin{equation*}
\sum_{a \in \fd_{q}^{*}}w(c^{-}(a))^{h}=\frac{1}{2^h}A^-(n,q)^h \sum
_{a \in \fd_{q}^{*}}(B^-(n,q)-K(\lambda ; a))^h
\end{equation*}
\begin{equation}\label{a47}
=\frac{1}{2^h}A^-(n,q)^h \sum_{l=0}^{h}(-1)^l
\binom{h}{l}B^-(n,q)^{h-l} M K^l.
\end{equation}
Similarly, we have
\begin{align}\label{a48}
  \begin{split}
&\sum_{a \in \fd_q^*}w(c^+(a))^h=\frac{1}{2^h}A^+(n,q)^h\\
&\times \sum_{l=0}^{h}(-1)^l \binom{h}{l}(B^+(n,q)-q^2+q)^{h-l} M
K^{2l}
  \end{split}
\end{align}
\begin{align}\label{a49}
  \begin{split}
=\frac{1}{2^h}A^+(n,q)^h  \sum_{l=0}^{h}(-1)^l \binom{h}{l}(B^+(n,q)
-q^2)^{h-l} M K_{2}^{l}.
  \end{split}
\end{align}

Note here that, in view of (\ref{a24}), obtaining power moments of
2-dimensional Kloosterman sums is equivalent to getting even power
moments of Kloosterman sums. Also, one has to separate the term
corresponding to $l=h$ in (\ref{a47})-(\ref{a49}), and notes $
dim_{\fd_2}C (DC^{\mp}(n,q))^{\bot}=r$.


%








\end{document}